\theoremstyle{plain}% Theorem-like structures provided by amsthm.sty
\newtheorem{theorem}{Theorem}[section]
\newtheorem{lemma}[theorem]{Lemma}
\newtheorem{proposition}[theorem]{Proposition}
\theoremstyle{definition}
\newtheorem{definition}[theorem]{Definition}
\theoremstyle{remark}
\newtheorem{remark}{Remark}
    \DeclareMathOperator\supp{supp}
    \DeclareMathOperator\Fuj{Fuj}
    \DeclareMathOperator\Span{span}
    \DeclareMathOperator\sign{sign}
    \DeclareMathOperator\loc{loc}
    \DeclareMathOperator\hor{H}
    \DeclareMathOperator\GN{GN}
    \DeclareMathOperator\diver{div}
\begin{document}

\begin{frontmatter}

%% Title, authors and addresses

%% use the tnoteref command within \title for footnotes;
%% use the tnotetext command for theassociated footnote;
%% use the fnref command within \author or \address for footnotes;
%% use the fntext command for theassociated footnote;
%% use the corref command within \author for corresponding author footnotes;
%% use the cortext command for theassociated footnote;
%% use the ead command for the email address,
%% and the form \ead[url] for the home page:
%% \title{Title\tnoteref{label1}}
%% \tnotetext[label1]{}
%%\author{Name\corref{cor1}\fnref{label2}}
%% \ead{email address}
%%\ead[url]{home page}
% %\fntext[label2]{}
% %\cortext[cor1]{}
%% \address{Address\fnref{label3}}
%% \fntext[label3]{}

\title{Critical exponent of Fujita-type for the semilinear damped wave equation on the Heisenberg group with power nonlinearity}

%% use optional labels to link authors explicitly to addresses:
%% \author[label1,label2]{}
%% \address[label1]{}
%% \address[label2]{}

%%\author{Alessandro Palmieri\corref{Ale}}
\author[Pisa,Waseda,Sofia]{Vladimir Georgiev}

\ead{vladimir.simeonov.gueorguiev@unipi.it}

\author[Pisa]{Alessandro Palmieri}

\ead{alessandro.palmieri.math@gmail.com}

\address[Pisa]{Department of Mathematics, University of Pisa, Largo B. Pontecorvo 5, 56127 Pisa, Italy}

\address[Waseda]{Faculty of Science and Engineering, Waseda University 3-4-1, Okubo, Shinjuku-ku, Tokyo 169-8555, Japan}

\address[Sofia]{Institute of Mathematics and Informatics-BAS Acad. G. Bonchev Str., Block 8, 1113 Sofia, Bulgaria}

%\cortext[Ale]{The author is supported by INdAM - GNAMPA Project 2017 and is member of the Gruppo Nazionale per L'Analisi Matematica, la Probabilit\`{a} e le loro Applicazioni (GNAMPA) of the Instituto Nazionale di Alta Matematica (INdAM)}
%\cortext[Ale]{The PhD study of the author is supported by S\"{a}chsiches Landesgraduiertenstipendium. The author is member of the Gruppo Nazionale per L'Analisi Matematica, la Probabilit\`{a} e le loro Applicazioni (GNAMPA) of the Instituto Nazionale di Alta Matematica (INdAM)}

\begin{abstract}
%% Text of abstract
In this paper, we consider the Cauchy problem for the semilinear damped wave equation  on the Heisenberg group with power non-linearity. We prove that the critical exponent is the Fujita  exponent $p_{\Fuj}(\mathcal{Q}) = 1+2/\mathcal{Q}$, where $\mathcal{Q}$ is the homogeneous dimension of the Heisenberg group.

 On the one hand, we will prove the global existence of small data solutions for $p>p_{\Fuj}(\mathcal{Q})$ in an exponential weighted energy space. On the other hand, a blow-up result for $1<p\leq p_{\Fuj}(\mathcal{Q})$ under certain integral sign assumptions for the Cauchy data by using the test function method. 
\end{abstract}

\begin{keyword} damped wave equation \sep Heisenberg group \sep critical exponent \sep test function method \sep energy spaces with exponential weight.
%\begin{keyword} Semi-linear scale-invariant wave equation, mass term, dissipation term, power non-linearity, \textcolor{red}{large data} local existence, small data global existence, blow-up, critical exponent, energy spaces with exponential weight.
%% keywords here, in the form: keyword \sep keyword

%% PACS codes here, in the form: \PACS code \sep code

%% MSC codes here, in the form: \MSC code \sep code
%% or \MSC[2008] code \sep code (2000 is the default)
\MSC[2010] Primary:  35B33, 35L71, 35R03; Secondary: 35B44, 35B45, 43A80, 58J45.
\end{keyword}

\end{frontmatter}

%% \linenumbers

%% main text
\section{Introduction}\label{Intro}

In this paper we study the global in time existence of small data solutions and the blow-up in finite time of solutions to the Cauchy problem 
\begin{align}\label{CP semilinear 1}
\begin{cases}
u_{tt}-\Delta_{\hor} u+ u_t=|u|^p, & t>0, \,\, \eta \in \mathbf{H}_n, \\ u(0,\eta)=u_0(\eta), & \eta\in\mathbf{H}_n, \\ u_t(0,\eta)=u_1(\eta), & \eta\in\mathbf{H}_n,
\end{cases}
\end{align} where $p>1$ and $\Delta_{\hor}$ denotes the sub-Laplacian on $\mathbf{H}_n$ (see Subsection \ref{Subsection Heisenberg} for the definition of $\Delta_{\hor}$ and for a short summary on the Heisenberg group).

In the Euclidean case, the critical exponent of Cauchy problem for  the semilinear damped wave equations 
\begin{align}\label{CP semilinear Eucl}
\begin{cases}
u_{tt}-\Delta u+ u_t=|u|^p, & t>0, \,\, x \in \mathbb{R}^n, \\ u(0,x)=u_0(x), & x \in \mathbb{R}^n, \\ u_t(0,x)=u_1(x), & x \in \mathbb{R}^n,
\end{cases}
\end{align} is the same as for the semilinear heat equations, that is, the so-called Fujita exponent 
\begin{align*}
p_{\Fuj}(n) \doteq 1+\frac{2}{n}.
\end{align*} This fact has been proved by Todorova-Yordanov \cite{TY01} for compactly supported data and by Ikehata-Tanizawa \cite{IT05} in the not-compact case. In both works, the global existence result of small data solutions in the super-Fujita case is demonstrated in an exponentially weighted energy space. The crucial difference consists in the choice of the exponent function for the exponential weight. Furthermore, a fundamental tools in both these works are the decay estimates on $L^2(\mathbb{R}^n)$ - basis for the corresponding linear homogeneous Cauchy problem, that have been derived by Matsumura in the pioneering paper \cite{Mat76}, by using phase space analysis.

This approach with exponential weighted energy spaces has been applied also the case of time-dependent coefficients: see \cite{DLR13} for the semilinear wave equation with effective damping and \cite{Dab13,Pal17} for the scale-invariant case, respectively.

Recently, it has been shown that the critical exponent for the semilinear heat equation on the Heisenberg group is the Fujita exponent $p_{\Fuj}(\mathcal{Q})$, where $\mathcal{Q}$ is the homogeneous dimension of $\mathbf{H}_n$, and on more general stratified Lie groups (cf. \cite{Ruz18,GP19,GP19Car}).

In this paper, we will show that $p_{\Fuj}(\mathcal{Q})$ is the critical exponent for the Cauchy problem \eqref{CP semilinear 1}  as well. Concerning the existence of small data solutions which are globally defined in time for $p>p_{\Fuj}(\mathcal{Q})$, we will adapt in a suitable way the approach of \cite{TY01,IT05} with exponential weights. In fact, the counterpart of Matsumura-type estimates for the Heisenberg group is considered in \cite{Pal19}, where the group Fourier transform is employed in order to show decay estimates on $L^2(\mathbf{H}_n)$ - basis for the corresponding homogeneous linear Cauchy problem (cf. Proposition \ref{Prop Lin Estim}). On the other hand, the non-existence of global solutions when $1< p\leq p_{\Fuj}(\mathcal{Q})$, under certain integral sign  assumptions for the Cauchy data and regardless of the smallness of these, is obtained by using the so-called test function method (see \cite{MP01} or, for example, \cite{NPR16,CP19,GP19,GP19Car}).

Finally, we point out that in \cite{RT18} a global existence result for small data solutions is proved in the more general frame of graded Lie groups for the semilinear damped wave equation with an additional mass term. For that model, no further lower bound for the exponent of the nonlinearity $p>1$ has to be required, due to the exponential decay rate in $L^2$ - $L^2$ estimates for the corresponding linear homogeneous Cauchy problem (nevertheless, an upper bound for $p$ is required, although it is a technical assumption due to the application of an inequality of Gagliardo-Nirenberg type). We refer to \cite{Pal19} for further details on the differences that are produced by the absence of the mass term in the treatment of the corresponding linear problems.

\subsection{The Heisenberg group} \label{Subsection Heisenberg} 

The \emph{Heisenberg group} is the Lie group $\mathbf{H}_n=\mathbb{R}^{2n+1}$ equipped with the multiplication rule
\begin{equation*}
(x,y,\tau)\circ (x',y',\tau')= \left(x+x',y+y',\tau+\tau'+\tfrac{1}{2} (x\cdot y'-x'\cdot y)\right),
\end{equation*} where $\cdot$ denotes the standard scalar product in $\mathbb{R}^n$. %The identity element for $\mathbf{H}_n$ is $0$ and $\eta^{-1}=-\eta$ for any $\eta\in \mathbf{H}_n$.
 A system of left-invariant vector fields that span the Lie algebra $\mathfrak{h}_n$ is given by
\begin{align*}
 \ X_j  \doteq  \partial_{x_j}-\frac{y_j}{2}  \,\partial_\tau,   \ Y_j  \doteq  \partial_{y_j}+\frac{x_j}{2}  \, \partial_\tau ,  \ \partial_\tau,
\end{align*} where $1\leq j\leq n$. This system satisfies the commutation relations
\begin{align*}
[X_j,Y_k]=\delta_{jk} \, \partial_\tau \quad \mbox{for}  \ 1\leq j,k \leq n.
\end{align*} Therefore, $\mathfrak{h}_n$ admits the stratification $\mathfrak{h}_n= V_1\oplus V_2$, where $V_1\doteq \Span\{X_j,Y_j\}_{1\leq j\leq n}$ and $V_2\doteq \Span\{\partial_\tau\}$. Hence, $\mathbf{H}_n$ is a 2 step stratified Lie group, whose homogeneous dimension is $\mathcal{Q}=2n+2$.
The \emph{sub-Laplacian} on $\mathbf{H}_n$ is defined as
\begin{align}
\Delta_{\hor} & \doteq \sum_{j=1}^n (X^2_j+Y_j^2)
= \sum_{j=1}^n \big(\partial_{x_j}^2+\partial_{y_j}^2\big)+\frac{1}{4} \sum_{j=1}^n \big(x_j^2+y_j^2\big)\partial_{\tau}^2   +\sum_{j=1}^n\Big(x_j \, \partial_{y_j\tau}^2-y_j \,\partial_{ x_j \tau}^2\Big).\label{def sub laplacian}
\end{align} 
For a function $v:\mathbf{H}_n\to \mathbb{R}$, the \emph{horizontal gradient} of $v$ is 
\begin{align*}
\nabla_{\hor} v \doteq (X_1 v, \cdots, X_n v, Y_1 v, \cdots, Y_n v) \equiv \sum_{j=1}^n ((X_j v)X_j +(Y_j v)Y_j),
\end{align*} where each fiber of the horizontal subbundle $\mathrm{H}\mathbf{H}_n = \sqcup_{\eta\in \mathbf{H}_n}\mathrm{H}_\eta \mathbf{H}_n$ can be endowed with a scalar product $\langle \cdot, \cdot\rangle_{\eta}$  in such a way that  $X_1(\eta),\cdots,X_n(\eta),Y_1(\eta),\cdots,Y_n(\eta)$ are orthonormal in $(\mathrm{H}_\eta \mathbf{H}_n,\langle \cdot, \cdot\rangle_{\eta})$ for any $\eta\in \mathbf{H}_n$.
Therefore, if $X=\sum_{j=1}^n \left( \alpha_j X_j +\beta_j Y_1 \right)+\gamma \partial_\tau$ is a vector field on $\mathbf{H}_n$ with $\alpha_j,\beta_j , \gamma \in \mathcal{C}^1(\mathbf{H}_n)$ for any $j=1,\cdots, n$, the \emph{divergence} of $X$ is the function
\begin{align*}
\diver X \doteq \sum_{j=1}^n \left(X_j  \alpha_j +Y_j \beta_j\right) +\partial_\tau \gamma.
\end{align*}In particular, the sub-Laplacian may be expressed  also as $\Delta_{\hor} v = \diver (\nabla_{\hor} v)$.
 For a function $v\in L^2(\mathbf{H}_n)$ we say that $X_j v, Y_j v\in L^1_{\mathrm{loc}}(\mathbf{H}_n)$ exist \emph{in the sense of distributions}, if the integral relations
\begin{align*}
\int_{\mathbf{H_n}} \big(X_j v \big)(\eta) \, \phi(\eta) \, \mathrm{d}\eta = \int_{\mathbf{H_n}}  v (\eta) \, \big(X_j^* \phi\big)(\eta) \, \mathrm{d}\eta \quad \mbox{and} \quad \int_{\mathbf{H_n}} \big(Y_j v \big)(\eta) \, \phi(\eta) \, \mathrm{d}\eta = \int_{\mathbf{H_n}}  v (\eta) \, \big(Y_j^* \phi\big)(\eta) \, \mathrm{d}\eta
\end{align*} are fulfilled for any $\phi \in \mathcal{C}^\infty_0(\mathbf{H}_n)$, where $X_j^*=-X_j$ and $Y_j^*=-Y_j$ denote the formal adjoint operators of $X_j$ and $Y_j$, respectively. Therefore, in our framework, the \emph{Sobolev space} $H^1(\mathbf{H}_n)$ is the set of all functions $v\in L^2(\mathbf{H}_n)$ such that $X_j v, Y_j v$ exist in the sense of distributions and $X_j v, Y_j v \in  L^2(\mathbf{H}_n) $ for any $j=1,\cdots,n$, equipped with the norm
\begin{align*}
\| v\|_{H^1(\mathbf{H}_n)}^2 & \doteq \| v\|_{L^2(\mathbf{H}_n)}^2 +\| \nabla_{\hor} v\|_{L^2(\mathbf{H}_n)}^2 \\ &= \| v\|_{L^2(\mathbf{H}_n)}^2 + \sum_{j=1}^n \left( \|  X_j v\|_{L^2(\mathbf{H}_n)}^2 +\|  Y_j v\|_{L^2(\mathbf{H}_n)}^2 \right).
\end{align*}

\subsection{Notations}

In this paper, we write $f \lesssim g$, when there exists a constant $C>0$ such that $f \leq Cg$. We write $f \approx g$ when $g \lesssim  f \lesssim g$.
Throughout the article we will denote by $\psi$ the function
\begin{equation}\label{definition of psi}
\psi(t,\eta ) \doteq \frac{|x|^2+|y|^2+4|\tau|}{8(1+t)}
\end{equation} for any $\eta=(x,y,\tau)\in \mathbf{H}_n$.
Let $\sigma>0$ and $t\geq 0$. Similarly to the Euclidean case considered in \cite{TY01} and \cite{IT05}, we define the \emph{Sobolev spaces $L^2$ and $H^1$ with exponential weight $\mathrm{e}^{\sigma\psi(t,\cdot)}$ }
\begin{align*}
L^2_{\sigma\psi(t,\cdot)}(\mathbf{H}_n)& \doteq \{v\in L^2(\mathbf{H}_n):\|\mathrm{e}^{\sigma\psi(t,\cdot)}v\|_{L^2(\mathbf{H}_n)}<\infty\},\\
H^1_{\sigma\psi(t,\cdot)}(\mathbf{H}_n)& \doteq \{v\in H^1(\mathbf{H}_n):\|\mathrm{e}^{\sigma\psi(t,\cdot)}v\|_{L^2(\mathbf{H}_n)}+\|\mathrm{e}^{\sigma\psi(t,\cdot)}\nabla_{\hor} v\|_{L^2(\mathbf{H}_n)}<\infty\},
\end{align*} endowed with the norms 
 \begin{align*}
\|v\|_{L^2_{\sigma\psi(t,\cdot)}(\mathbf{H}_n)}& \doteq\|\mathrm{e}^{\sigma\psi(t,\cdot)}v\|_{L^2(\mathbf{H}_n)},\\
\|v\|_{H^1_{\sigma\psi(t,\cdot)}(\mathbf{H}_n)}&\doteq\|\mathrm{e}^{\sigma\psi(t,\cdot)} v\|_{L^2(\mathbf{H}_n)}+\| \mathrm{e}^{\sigma\psi(t,\cdot)}\nabla_{\hor} v\|_{L^2(\mathbf{H}_n)}.
\end{align*} In the local and global existence results for \eqref{CP semilinear 1} we will consider always the special case $\sigma=1$ for the function spaces to which solutions belong. Nonetheless, in order to deal with the estimates of the nonlinearity,  it is necessary sometimes to consider the general case $\sigma>0$.
Finally, we denote by $\mathcal{A}$ the space
\begin{align}\label{space for initial data}
\mathcal{A}(\mathbf{H}_n)\doteq H^1_{\psi(0,\cdot)}(\mathbf{H}_n)\times L^2_{\psi(0,\cdot)}(\mathbf{H}_n) 
\end{align} to which initial data will be required to belong to.

\section{Main results}

Let us state the main theorems that will be proved in the present article.

\begin{theorem}\label{thm local existence} Let $n\geq 1$. Let us assume $p>1$ such that $p\leq p_{\GN}(\mathcal{Q}) \doteq  \frac{\mathcal{Q}}{\mathcal{Q}-2}$.
 Then for each initial data $(u_0,u_1)\in \mathcal{A} (\mathbf{H}_n)
 $ there exists a maximal existence time $T_\mathrm{max}\in(0,\infty]$ such that the Cauchy problem \eqref{CP semilinear 1} has a unique solution $u\in\mathcal{C}([0,T_\mathrm{max}),H^1(\mathbf{H}_n))\cap \mathcal{C}^1([0,T_\mathrm{max}),L^2(\mathbf{H}_n))$.
 
 Moreover, for any $T\in (0,T_\mathrm{max})$ it holds
\begin{align*}
\sup_{t\in[0,T]}\left(\|\mathrm{e}^{\psi(t,\cdot) }v(t,\cdot)\|_{L^2(\mathbf{H}_n)}+\|\mathrm{e}^{\psi(t,\cdot)}\nabla_{\hor} v(t,\cdot)\|_{L^2(\mathbf{H}_n)}+\|\mathrm{e}^{\psi(t,\cdot) }v_t(t,\cdot)\|_{L^2(\mathbf{H}_n)}\right)<\infty.
\end{align*} Finally, if $T_\mathrm{max}<\infty$, then
\begin{align*}
\limsup_{T\to T_\mathrm{max}^-}\left(\|\mathrm{e}^{\psi(t,\cdot) }v(t,\cdot)\|_{L^2(\mathbf{H}_n)}+\|\mathrm{e}^{\psi(t,\cdot)}\nabla_{\hor} v(t,\cdot)\|_{L^2(\mathbf{H}_n)}+\|\mathrm{e}^{\psi(t,\cdot) }v_t(t,\cdot)\|_{L^2(\mathbf{H}_n)}\right)=\infty.
\end{align*}
\end{theorem}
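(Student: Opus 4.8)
\emph{Strategy.} The plan is to construct a local solution by a Banach fixed-point argument carried out directly in the exponentially weighted energy space, and then to propagate it up to a maximal time by a standard continuation scheme. For $T,M>0$ I would work in the complete metric space
\begin{align*}
X(T,M)\doteq\big\{u\in\mathcal{C}([0,T],H^1(\mathbf{H}_n))\cap\mathcal{C}^1([0,T],L^2(\mathbf{H}_n)):\|u\|_{X(T)}\leq M\big\},
\end{align*}
with
\begin{align*}
\|u\|_{X(T)}\doteq\sup_{t\in[0,T]}\Big(\|\mathrm{e}^{\psi(t,\cdot)}u(t,\cdot)\|_{L^2(\mathbf{H}_n)}+\|\mathrm{e}^{\psi(t,\cdot)}\nabla_{\hor}u(t,\cdot)\|_{L^2(\mathbf{H}_n)}+\|\mathrm{e}^{\psi(t,\cdot)}u_t(t,\cdot)\|_{L^2(\mathbf{H}_n)}\Big),
\end{align*}
endowed with the distance induced by $\|\cdot\|_{X(T)}$. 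For $u\in X(T,M)$ I set $\Phi(u)\doteq v$, the solution of the \emph{linear} inhomogeneous problem $v_{tt}-\Delta_{\hor}v+v_t=|u|^p$ with data $(u_0,u_1)$; a local solution of \eqref{CP semilinear 1} is precisely a fixed point of $\Phi$. Everything then reduces to a linear weighted energy estimate and a weighted estimate of the nonlinearity.

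\emph{Linear weighted estimate.} Multiplying $v_{tt}-\Delta_{\hor}v+v_t=f$ by $\mathrm{e}^{2\psi}v_t$, integrating over $\mathbf{H}_n$ and integrating by parts in the sub-Laplacian term via $\diver$ and the adjoint relations $X_j^*=-X_j$, $Y_j^*=-Y_j$, one obtains the identity
\begin{align*}
\frac{1}{2}\frac{\mathrm{d}}{\mathrm{d}t}\int_{\mathbf{H}_n}\mathrm{e}^{2\psi}\big(v_t^2+|\nabla_{\hor}v|^2\big)\,\mathrm{d}\eta+\int_{\mathbf{H}_n}\mathrm{e}^{2\psi}v_t^2\,\mathrm{d}\eta=\int_{\mathbf{H}_n}\mathrm{e}^{2\psi}\Big(\partial_t\psi\,\big(v_t^2+|\nabla_{\hor}v|^2\big)-2\,(\nabla_{\hor}\psi\cdot\nabla_{\hor}v)\,v_t+f\,v_t\Big)\,\mathrm{d}\eta.
\end{align*}
The only dangerous contribution is the cross term $-2(\nabla_{\hor}\psi\cdot\nabla_{\hor}v)v_t$. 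The mechanism that closes the scheme is a \textbf{structural inequality} for $\psi$: a direct computation gives
\begin{align*}
X_j\psi=\frac{x_j-y_j\,\sgn\tau}{4(1+t)},\qquad Y_j\psi=\frac{y_j+x_j\,\sgn\tau}{4(1+t)},\qquad|\nabla_{\hor}\psi|^2=\frac{|x|^2+|y|^2}{8(1+t)^2}\leq\frac{|x|^2+|y|^2+4|\tau|}{8(1+t)^2}=-\partial_t\psi .
\end{align*}
Estimating $2|\nabla_{\hor}\psi||\nabla_{\hor}v||v_t|\leq|\nabla_{\hor}\psi|^2|\nabla_{\hor}v|^2+v_t^2$ and combining with the $\partial_t\psi$ term, the gradient contribution becomes $(|\nabla_{\hor}\psi|^2+\partial_t\psi)|\nabla_{\hor}v|^2\leq0$, so the whole right-hand side is controlled by $\int\mathrm{e}^{2\psi}v_tf$. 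A Gronwall argument (together with an elementary estimate for the zeroth-order term $\|\mathrm{e}^{\psi}v\|_{L^2}$, using $\partial_t\psi\leq0$) then yields $\|v\|_{X(T)}\lesssim\|(u_0,u_1)\|_{\mathcal{A}}+\int_0^T\|\mathrm{e}^{\psi(s,\cdot)}f(s,\cdot)\|_{L^2}\,\mathrm{d}s$.

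\emph{Nonlinear estimate.} To estimate $\|\mathrm{e}^{\psi}|u|^p\|_{L^2}$ I would exploit the superlinearity: choosing $\sigma'\in[1/p,1)$ (possible since $p>1$) and using $\psi\geq0$,
\begin{align*}
\mathrm{e}^{\psi}|u|^p=\mathrm{e}^{(1-p\sigma')\psi}\big(\mathrm{e}^{\sigma'\psi}|u|\big)^p\leq\big(\mathrm{e}^{\sigma'\psi}|u|\big)^p,
\end{align*}
whence $\|\mathrm{e}^{\psi}|u|^p\|_{L^2}\leq\|\mathrm{e}^{\sigma'\psi}u\|_{L^{2p}}^p$. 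The Gagliardo--Nirenberg/Sobolev embedding $H^1(\mathbf{H}_n)\hookrightarrow L^{2p}(\mathbf{H}_n)$, valid exactly for $2<2p\leq\frac{2\mathcal{Q}}{\mathcal{Q}-2}$, i.e. $p\leq p_{\GN}(\mathcal{Q})$, gives $\|\mathrm{e}^{\sigma'\psi}u\|_{L^{2p}}\lesssim\|\mathrm{e}^{\sigma'\psi}u\|_{H^1}$; this is where the hypothesis on $p$ enters. Finally $\nabla_{\hor}(\mathrm{e}^{\sigma'\psi}u)=\mathrm{e}^{\sigma'\psi}\nabla_{\hor}u+\sigma'\mathrm{e}^{\sigma'\psi}u\,\nabla_{\hor}\psi$, and since $|\nabla_{\hor}\psi|\leq\sqrt{-\partial_t\psi}\leq\sqrt{\psi}$ and $\sqrt{\psi}\,\mathrm{e}^{\sigma'\psi}\leq C_{\sigma'}\mathrm{e}^{\psi}$ for $\sigma'<1$, the extra term is absorbed into $\|\mathrm{e}^{\psi}u\|_{L^2}$. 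Thus $\|\mathrm{e}^{\psi}|u|^p\|_{L^2}\lesssim\|u\|_{X(T)}^p$; the pointwise bound $\big||u|^p-|w|^p\big|\lesssim(|u|^{p-1}+|w|^{p-1})|u-w|$ together with Hölder and the same device gives the Lipschitz estimate $\|\mathrm{e}^{\psi}(|u|^p-|w|^p)\|_{L^2}\lesssim(\|u\|_{X(T)}^{p-1}+\|w\|_{X(T)}^{p-1})\|u-w\|_{X(T)}$.

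\emph{Conclusion and maximal time.} Combining the two estimates gives $\|\Phi(u)\|_{X(T)}\leq C\|(u_0,u_1)\|_{\mathcal{A}}+CTM^p$ and $\|\Phi(u)-\Phi(w)\|_{X(T)}\leq CTM^{p-1}\|u-w\|_{X(T)}$; choosing $M\doteq2C\|(u_0,u_1)\|_{\mathcal{A}}$ and $T$ small (depending only on $M$) makes $\Phi$ a contraction of $X(T,M)$ into itself, and Banach's fixed point theorem yields a unique local solution with the claimed regularity and finiteness of the weighted quantities. For the maximal time, I let $T_{\mathrm{max}}$ be the supremum of existence times, glueing the local solutions by uniqueness. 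Since $\psi(t,\cdot)$ is nonincreasing in $t$ and the guaranteed lifespan depends only on an upper bound for the weighted norm of the data, if $T_{\mathrm{max}}<\infty$ while that norm stayed bounded as $t\to T_{\mathrm{max}}^-$ one could restart from a time close to $T_{\mathrm{max}}$ (with the time-translated weight $\psi(T_{\mathrm{max}}+\cdot,\cdot)$, for which the structural inequality still holds) and extend the solution beyond $T_{\mathrm{max}}$, a contradiction; this gives the blow-up alternative.

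\emph{Main obstacle.} The crux is the unboundedness of $\nabla_{\hor}\psi$, which threatens both the linear energy estimate and the nonlinear one. In the linear step it is defeated by the sharp structural inequality $|\nabla_{\hor}\psi|^2\leq-\partial_t\psi$ (tight on $\{\tau=0\}$), and in the nonlinear step by the gain $\mathrm{e}^{(1-p\sigma')\psi}\leq1$ coming from $p>1$ together with the use of an intermediate weight $\sigma'\in[1/p,1)$ — which is exactly why the general parameter $\sigma$ is needed in the intermediate estimates. The non-commutativity of $\mathbf{H}_n$ only makes the integrations by parts slightly more delicate, but the adjoint relations $X_j^*=-X_j$, $Y_j^*=-Y_j$ render them routine.
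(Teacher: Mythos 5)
Your proposal is correct, and its overall architecture coincides with the paper's: a Banach fixed-point argument for the map $v\mapsto\Phi(v)$ solving the linear problem with source $|v|^p$, carried out in the exponentially weighted energy space, with the linear step resting on the sign conditions $\psi_t\le 0$ and $|\nabla_{\hor}\psi|^2+\psi_t\le 0$ (your Young-inequality treatment of the cross term is equivalent to the paper's completed square in \eqref{fundamental equality 1}, and your Gronwall step on $\mathcal{E}_{\psi}^{1/2}$ is the paper's use of Bihari's lemma), followed by the same continuation argument for the blow-up alternative. The genuine difference is the nonlinear estimate. The paper proves a weighted Gagliardo--Nirenberg inequality (Lemmas \ref{1 lemma GN with weight} and \ref{2 lemma GN with weight}): there, $\|\nabla_{\hor}(\mathrm{e}^{\sigma\psi}v)\|_{L^2(\mathbf{H}_n)}$ is controlled by $\|\mathrm{e}^{\sigma\psi}\nabla_{\hor}v\|_{L^2(\mathbf{H}_n)}$ through an integration by parts in which the dangerous cross term acquires a favorable sign because $\Delta_{\hor}\psi\ge 0$ in the distributional sense; this forces one to handle the Dirac mass on the hypersurface $\{\tau=0\}$, for which trace theorems on $\mathbf{H}_n$ are invoked. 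You instead dominate the gradient of the weight pointwise, $|\nabla_{\hor}\psi|\le\sqrt{-\psi_t}\le\sqrt{\psi}$, absorb $\sqrt{\psi}\,\mathrm{e}^{\sigma'\psi}\lesssim\mathrm{e}^{\psi}$ for $\sigma'<1$, and then apply the unweighted embedding $H^1(\mathbf{H}_n)\hookrightarrow L^{2p}(\mathbf{H}_n)$; both routes rely on the same superlinearity device (intermediate weight $\sigma=1/p$, respectively $\sigma'\in[1/p,1)$, which is exactly where $p>1$ enters) and on the same restriction $p\le p_{\GN}(\mathcal{Q})$ for the embedding. Your route is more elementary: it bypasses the integration-by-parts lemma and the trace issue on $\{\tau=0\}$ entirely, and it fully suffices for local existence. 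What the paper's heavier lemma buys is the quantitative form of the inequality, with the decay factor $(1+t)^{(1-\theta(q))/2}$ and the interpolation between the weighted and unweighted gradient norms; that precision is irrelevant for Theorem \ref{thm local existence} but is precisely what the proof of the global result (Theorem \ref{thm glob exi exp data}) reuses, which is why the paper proves it once in this sharper form.
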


The previous local existence result is a preparatory result to the next global existence theorem, whose proof is based on a contradiction argument that requires the existence of local in time solutions for \eqref{CP semilinear 1}. 

\begin{theorem} \label{thm glob exi exp data} Let $n\geq 1$. Let us consider $1<p\leq p_{\GN}(\mathcal{Q}) $ such that 
$p>p_{\Fuj}\left(\mathcal{Q}\right)$.
 Then, there exists $\varepsilon_0>0$ such that for any initial data 
\begin{align}\label{global existence eponential weight data cond}
(u_0,u_1)\in \mathcal{A}(\mathbf{H}_n)\quad \mbox{satisfying}\quad \|(u_0,u_1)\|_{\mathcal{A}(\mathbf{H}_n)}\leq \varepsilon_0
\end{align} there is a unique solution $u\in \mathcal{C}([0,\infty),H^1_{\psi(t,\cdot)}(\mathbf{H}_n))\cap \mathcal{C}^1([0,\infty),L^2_{\psi(t,\cdot)}(\mathbf{H}_n))$ to the Cauchy problem \eqref{CP semilinear 1}.
Moreover, $u$ satisfies the following estimates
\begin{align*}
\|u(t,\cdot)\|_{L^2(\mathbf{H}_n)}&\lesssim (1+t)^{-\frac{\mathcal{Q}}{2}}\|(u_0,u_1)\|_\mathcal{A(\mathbf{H}_n)},\\
\|\nabla_{\hor} u(t,\cdot)\|_{L^2(\mathbf{H}_n)}&\lesssim (1+t)^{-\frac{\mathcal{Q}}{4}-\frac{1}{2}}\|(u_0,u_1)\|_\mathcal{A(\mathbf{H}_n)},\\
\|u_t(t,\cdot)\|_{L^2(\mathbf{H}_n)}&\lesssim  (1+t)^{-\frac{\mathcal{Q}}{4}-1}\|(u_0,u_1)\|_\mathcal{A(\mathbf{H}_n)},\\
%\|\mathrm{e}^{\psi(t,\cdot)}u(t,\cdot)\|_{L^2(\mathbf{H}_n)}&\lesssim (1+t)\|(u_0,u_1)\|_\mathcal{A(\mathbf{H}_n)},\\
\|\mathrm{e}^{\psi(t,\cdot)}\nabla_{\hor} u (t,\cdot)\|_{L^2(\mathbf{H}_n)}&\lesssim \|(u_0,u_1)\|_\mathcal{A(\mathbf{H}_n)}, \\
\|\mathrm{e}^{\psi(t,\cdot)} u_t(t,\cdot)\|_{L^2(\mathbf{H}_n)}&\lesssim \|(u_0,u_1)\|_\mathcal{A(\mathbf{H}_n)}
\end{align*} for any $t\geq 0$.
\end{theorem}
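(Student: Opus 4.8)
The plan is to derive a single global-in-time a priori estimate for the local solution furnished by Theorem \ref{thm local existence} and then to close the argument by contradiction, exactly as suggested in the text: were the maximal time $T_\mathrm{max}$ finite, a uniform bound on the weighted energy would contradict the blow-up criterion in Theorem \ref{thm local existence}. Concretely, for $T\in(0,T_\mathrm{max})$ I would introduce the functional
\begin{align*}
M(T)\doteq\sup_{0\le t\le T}\Big[&(1+t)^{\frac{\mathcal{Q}}{2}}\|u(t,\cdot)\|_{L^2(\mathbf{H}_n)}+(1+t)^{\frac{\mathcal{Q}}{4}+\frac12}\|\nabla_{\hor} u(t,\cdot)\|_{L^2(\mathbf{H}_n)}\\
&+(1+t)^{\frac{\mathcal{Q}}{4}+1}\|u_t(t,\cdot)\|_{L^2(\mathbf{H}_n)}+\|\mathrm{e}^{\psi(t,\cdot)}\nabla_{\hor} u(t,\cdot)\|_{L^2(\mathbf{H}_n)}+\|\mathrm{e}^{\psi(t,\cdot)}u_t(t,\cdot)\|_{L^2(\mathbf{H}_n)}\Big],
\end{align*}
which collects precisely the five quantities in the statement, and reduce the theorem to the a priori inequality $M(T)\lesssim\|(u_0,u_1)\|_{\mathcal{A}(\mathbf{H}_n)}+M(T)^p$, uniform in $T$. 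A standard continuity (bootstrap) argument then shows that for $\|(u_0,u_1)\|_{\mathcal{A}(\mathbf{H}_n)}\le\varepsilon_0$ small one has $M(T)\le C\varepsilon_0$ for every $T<T_\mathrm{max}$, forcing $T_\mathrm{max}=\infty$ and yielding the claimed decay and weighted bounds.

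The polynomially decaying norms I would control through the Duhamel representation $u(t)=u^{\mathrm{lin}}(t)+\int_0^t E_1(t-s)\,|u(s)|^p\,\mathrm{d}s$, where $u^{\mathrm{lin}}$ is the free evolution with data $(u_0,u_1)$ and $E_1(t-s)$ propagates the source. Applying the Matsumura-type $L^2$--$L^2$ and $L^1$--$L^2$ estimates of Proposition \ref{Prop Lin Estim} to each factor, splitting the integral as $\int_0^{t/2}+\int_{t/2}^t$, and writing the source through $\||u(s)|^p\|_{L^1}=\|u(s)\|_{L^p}^p$ and $\||u(s)|^p\|_{L^2}=\|u(s)\|_{L^{2p}}^p$, one bounds the $L^p$ and $L^{2p}$ norms by the Gagliardo--Nirenberg inequality on $\mathbf{H}_n$ (whence the requirement $p\le p_{\GN}(\mathcal{Q})$, i.e. $H^1\hookrightarrow L^{2p}$). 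The exponent $p>p_{\Fuj}(\mathcal{Q})$ enters exactly here: it is the condition ensuring that the time integrals $\int_0^{t}(1+t-s)^{-\mathcal{Q}/2}(1+s)^{-\gamma p}\,\mathrm{d}s$ arising from the Duhamel term (with $\gamma$ the decay rate of the lower order norms carried by $M(T)$) converge and decay at the rate $(1+t)^{-\mathcal{Q}/2}$, so that the nonlinear contribution is again of size $M(T)^p$ with the prescribed weights.

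For the exponentially weighted norms I would run the weighted energy method. Testing \eqref{CP semilinear 1} with $\mathrm{e}^{2\psi}u_t$ and integrating over $\mathbf{H}_n$ produces
\begin{align*}
&\frac{1}{2}\frac{\mathrm{d}}{\mathrm{d}t}\int_{\mathbf{H}_n}\mathrm{e}^{2\psi}\big(u_t^2+|\nabla_{\hor}u|^2\big)\,\mathrm{d}\eta+\int_{\mathbf{H}_n}\mathrm{e}^{2\psi}u_t^2\,\mathrm{d}\eta-\int_{\mathbf{H}_n}\psi_t\,\mathrm{e}^{2\psi}\big(u_t^2+|\nabla_{\hor}u|^2\big)\,\mathrm{d}\eta\\
&\quad+2\int_{\mathbf{H}_n}\mathrm{e}^{2\psi}u_t\,\nabla_{\hor}\psi\cdot\nabla_{\hor}u\,\mathrm{d}\eta=\int_{\mathbf{H}_n}\mathrm{e}^{2\psi}u_t\,|u|^p\,\mathrm{d}\eta.
\end{align*}
The decisive structural fact is that the function $\psi$ of \eqref{definition of psi} satisfies $\partial_t\psi+|\nabla_{\hor}\psi|^2\le0$ on $\mathbf{H}_n$, since a direct computation gives $|\nabla_{\hor}\psi|^2=\frac{|x|^2+|y|^2}{8(1+t)^2}$ while $-\psi_t=\frac{|x|^2+|y|^2+4|\tau|}{8(1+t)^2}$. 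Using Young's inequality on the mixed term, $2|u_t\,\nabla_{\hor}\psi\cdot\nabla_{\hor}u|\le u_t^2+|\nabla_{\hor}\psi|^2|\nabla_{\hor}u|^2$, the $u_t^2$-part is absorbed by the damping integral and the $|\nabla_{\hor}\psi|^2|\nabla_{\hor}u|^2$-part by the good term $-\int\psi_t\,\mathrm{e}^{2\psi}|\nabla_{\hor}u|^2$; integrating in time then bounds $\|\mathrm{e}^{\psi(t,\cdot)}u_t\|_{L^2}$ and $\|\mathrm{e}^{\psi(t,\cdot)}\nabla_{\hor}u\|_{L^2}$ by the data norm in $\mathcal{A}(\mathbf{H}_n)$ (through $\psi(0,\cdot)$) plus the weighted nonlinear contribution.

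The step I expect to be the main obstacle is the estimate of the weighted nonlinear term $\int_0^t\!\int_{\mathbf{H}_n}\mathrm{e}^{2\psi}u_t\,|u|^p\,\mathrm{d}\eta\,\mathrm{d}s$, equivalently the control of $\|\mathrm{e}^{\psi}|u|^p\|_{L^2}=\|\mathrm{e}^{\psi/p}u\|_{L^{2p}}^p$ by $M(T)$. This demands a Gagliardo--Nirenberg inequality carrying the exponential weight: applying the unweighted inequality to $w=\mathrm{e}^{\sigma\psi}u$ produces the commutator term $\sigma\,\mathrm{e}^{\sigma\psi}u\,\nabla_{\hor}\psi$ in $\nabla_{\hor}w$, and since $\nabla_{\hor}\psi$ is unbounded in space it cannot be handled directly. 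The way around this is to exploit $|\nabla_{\hor}\psi|^2\le-\psi_t\le\psi/(1+t)$ together with the elementary absorption $\sqrt{\psi}\,\mathrm{e}^{\sigma\psi}\lesssim\mathrm{e}^{\sigma'\psi}$ for any $\sigma'>\sigma$, which trades a slightly larger exponential weight for a gain of $(1+t)^{-1/2}$; this is precisely why the weighted spaces are set up for general $\sigma>0$ rather than only $\sigma=1$. Combining this weighted Gagliardo--Nirenberg bound (admissible thanks to $p\le p_{\GN}(\mathcal{Q})$) with the polynomial decay already obtained for the lower order norms closes the inequality $M(T)\lesssim\|(u_0,u_1)\|_{\mathcal{A}(\mathbf{H}_n)}+M(T)^p$ and completes the argument.
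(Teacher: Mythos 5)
Your overall architecture (a bootstrap on a functional collecting the five norms, Duhamel plus the Matsumura-type estimates of Proposition \ref{Prop Lin Estim} for the polynomial decay, a weighted energy identity for the exponential norms) coincides with the paper's, and your energy identity and the sign properties of $\psi$ are correct. The fatal step is your treatment of the weighted nonlinear term. You propose to control $\int_0^t\int_{\mathbf{H}_n}\mathrm{e}^{2\psi}u_t|u|^p\,\mathrm{d}\eta\,\mathrm{d}s$ via Cauchy--Schwarz, i.e.\ through $\|\mathrm{e}^{\psi}u_t\|_{L^2}\|\mathrm{e}^{\psi/p}u\|_{L^{2p}}^p$. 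Even granting the sharpest weighted Gagliardo--Nirenberg inequality (Lemma \ref{2 lemma GN with weight}, with $\sigma=1/p$) and the optimal rates $\|\nabla_{\hor}u(s,\cdot)\|_{L^2}\lesssim(1+s)^{-\frac{\mathcal{Q}}{4}-\frac{1}{2}}M$, $\|\mathrm{e}^{\psi}\nabla_{\hor}u(s,\cdot)\|_{L^2}\lesssim M$, one obtains
\begin{align*}
\|\mathrm{e}^{\frac{1}{p}\psi(s,\cdot)}u(s,\cdot)\|_{L^{2p}(\mathbf{H}_n)}^{p}\lesssim(1+s)^{\frac{p}{2}(1-\theta(2p))-(p-1)\left(\frac{\mathcal{Q}}{4}+\frac{1}{2}\right)}M^p=(1+s)^{\frac{1}{2}-\frac{\mathcal{Q}}{2}(p-1)}M^p,
\end{align*}
so the time integral stays bounded only when $\frac{1}{2}-\frac{\mathcal{Q}}{2}(p-1)<-1$, i.e.\ $p>1+3/\mathcal{Q}$. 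This is strictly stronger than $p>p_{\Fuj}(\mathcal{Q})=1+2/\mathcal{Q}$; worse, since $p_{\GN}(\mathcal{Q})=\mathcal{Q}/(\mathcal{Q}-2)\leq 1+3/\mathcal{Q}$ as soon as $\mathcal{Q}\geq 6$, your scheme fails on essentially the whole admissible range when $n\geq 2$, and on $(3/2,7/4]$ when $n=1$; allowing the weighted norms to grow in time does not repair the deficit. The paper never applies Cauchy--Schwarz to this term: in Lemma \ref{lemma before glob exist with exp weight} (following Todorova--Yordanov and Ikehata--Tanizawa) one writes $\mathrm{e}^{2\psi}u_t|u|^p=\partial_t\big(\mathrm{e}^{2\psi}\tfrac{|u|^pu}{p+1}\big)-2\psi_t\mathrm{e}^{2\psi}\tfrac{|u|^pu}{p+1}$ and integrates in time, which trades the $L^{2p}$-type norm for $\|\mathrm{e}^{(\frac{2}{p+1}+\delta)\psi}u\|_{L^{p+1}}^{p+1}$ and gains a factor $(1+s)^{-1}$ from $|\psi_t|=\psi/(1+s)$ combined with $\psi\,\mathrm{e}^{-\delta(p+1)\psi}\lesssim 1$; this closes exactly under $p>p_{\Fuj}(\mathcal{Q})$. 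Without this integration by parts in time, the argument cannot reach the Fujita exponent.

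Three further points. First, in the Duhamel estimate you cannot bound $\||u(s,\cdot)|^p\|_{L^1}=\|u(s,\cdot)\|_{L^p}^p$ by the unweighted Gagliardo--Nirenberg inequality: in the admissible range $p\leq p_{\GN}(\mathcal{Q})=1+1/n\leq 2$, while Lemma \ref{Lemma dis GN Hn} requires $q\geq 2$ (and $H^1(\mathbf{H}_n)$ does not embed into $L^p(\mathbf{H}_n)$ for $p<2$ on a non-compact group). The exponential weight is indispensable here too: the paper uses \eqref{L1-L2 weight est} to get $\|u(s,\cdot)\|_{L^p}^p\lesssim(1+s)^{\frac{\mathcal{Q}}{4}}\|\mathrm{e}^{\delta\psi(s,\cdot)}u(s,\cdot)\|_{L^{2p}}^p$ and only then applies the weighted Gagliardo--Nirenberg inequality. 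Second, your commutator-plus-absorption route to the weighted Gagliardo--Nirenberg inequality introduces $\|\mathrm{e}^{\sigma'\psi}u\|_{L^2}$, a quantity not contained in $M(T)$ which grows linearly in time and would require an extra interpolation step; the paper's Lemma \ref{1 lemma GN with weight} avoids it altogether by integrating by parts and exploiting $\Delta_{\hor}\psi\geq\frac{n}{2(1+t)}$ in the distributional sense, so that only gradient norms appear on the right-hand side of \eqref{GN inequality weighted}. Third, your functional $M(T)$ carries the weight $(1+t)^{\frac{\mathcal{Q}}{2}}$ on $\|u(t,\cdot)\|_{L^2}$, which is unattainable even for the linear evolution: by \eqref{estimate u} the decay rate is $(1+t)^{-\frac{\mathcal{Q}}{4}}$ (the rate $-\frac{\mathcal{Q}}{2}$ displayed in the theorem is a misprint, and the paper's $X(T)$-norm indeed uses $(1+t)^{\frac{\mathcal{Q}}{4}}$), so with your normalization the a priori inequality $M(T)\lesssim\varepsilon_0+M(T)^p$ is false already for the free solution.
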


\begin{remark}
 Let us point out that the requirement $(u_0,u_1)\in \mathcal{A}(\mathbf{H}_n)$ in Theorem \ref{thm glob exi exp data} is stronger than the assumption $(u_0,u_1)\in (H^1(\mathbf{H}_n)\cap L^1(\mathbf{H}_n))\times (L^2(\mathbf{H}_n)\cap L^1(\mathbf{H}_n))$. Indeed,   the embedding
\begin{align*}
L^2_{\sigma\psi(t,\cdot)}(\mathbf{H}_n)\hookrightarrow L^1(\mathbf{H}_n)\cap L^2(\mathbf{H}_n)
\end{align*} holds for any $\sigma>0$ and $t\geq 0$. By using Cauchy-Schwarz inequality and the nonnegativity of $\psi$, it results
\begin{align}
\|v\|_{L^1(\mathbf{H}_n)}&\lesssim (1+t)^{\frac{\mathcal{Q}}{4}}\|\mathrm{e}^{\sigma\psi(t,\cdot)}v\|_{L^2(\mathbf{H}_n)},\label{L1-L2 weight est}\\
\|v\|_{L^2(\mathbf{H}_n)}&\lesssim \|\mathrm{e}^{\sigma\psi(t,\cdot)}v\|_{L^2(\mathbf{H}_n)}.\label{L2-L2 weight est}
\end{align} In order to  prove \eqref{L1-L2 weight est}, we employed the value of the integral of Gaussian-type
\begin{align*}
\int_{\mathbf{H}_n} \mathrm{e}^{-2\sigma \psi(t,\eta)} \, \mathrm{d}\eta & = \int_{\mathbb{R}^n} \mathrm{e}^{-\frac{\sigma|x|^2}{4(1+t)}} \, \mathrm{d}x \int_{\mathbb{R}^n} \mathrm{e}^{-\frac{\sigma|y|^2}{4(1+t)}} \, \mathrm{d}y \int_{\mathbb{R}} \mathrm{e}^{-\frac{\sigma|\tau|}{4(1+t)}} \, \mathrm{d}\tau  \\& = 2^{\mathcal{Q}+1}\pi^{\frac{\mathcal{Q}}{2}-1} \sigma^{-\frac{\mathcal{Q}}{2}} (1+t)^{\frac{\mathcal{Q}}{2}}.
\end{align*}  Furthermore, by H\"{o}lder's interpolation inequality we have also the embedding of $L^2_{\sigma\psi(t,\cdot)}(\mathbf{H}_n)$ in each $L^r(\mathbf{H}_n)$ for any $r\in [1,2]$, where the embedding constant depends on $t$, clearly.
%\begin{align*}
%\int_{\mathbb{R}^n}\mathrm{e}^{-\alpha |x|^2}dx=\left(\tfrac{\pi}{\alpha}\right)^{\frac{n}{2}} \quad \mbox{for any} \quad \alpha>0.
%\end{align*}
\end{remark}

\begin{theorem}\label{thm blow up} Let $n\geq 1$. Let $u_0,u_1 \in L^1(\mathbf{H}_n)$ such that 
\begin{align}
\liminf_{R\to \infty} \int_{\mathcal{D}_R}\big(u_0(\eta)+u_1(\eta)\big)  \mathrm{d}\eta >0, \label{assumption intial data TFM}
\end{align} where $\mathcal{D}_R\doteq B_n(R)\times B_n(R)\times [-R^2,R^2]$. Let us assume that $u\in L^p_{\mathrm{loc}}([0,T)\times \mathbb{R}^n)$ is a solution to \eqref{CP semilinear 1}, with life-span $T>0$.  If $1<p\leq p_{\Fuj}\left(Q\right)$,  then $T<\infty$, that is, the solutions $u$ blows up in finite time.
\end{theorem}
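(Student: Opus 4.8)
The plan is to argue by contradiction via the test function method: suppose that $u$ is a global ($T=\infty$) weak solution of \eqref{CP semilinear 1}. The first step is to fix the weak formulation. Multiplying the equation by a nonnegative test function $\phi\in\mathcal{C}^\infty_0([0,\infty)\times\mathbf{H}_n)$, integrating by parts twice in $t$ in the term $u_{tt}$, once in $t$ in the damping term $u_t$, and using the self-adjointness of $\Delta_{\hor}$, one arrives at
\[
\int_0^\infty\!\!\int_{\mathbf{H}_n}|u|^p\,\phi\,\mathrm{d}\eta\,\mathrm{d}t+\int_{\mathbf{H}_n}\big(u_0+u_1\big)\phi(0,\cdot)\,\mathrm{d}\eta-\int_{\mathbf{H}_n}u_0\,\phi_t(0,\cdot)\,\mathrm{d}\eta=\int_0^\infty\!\!\int_{\mathbf{H}_n}u\,\big(\phi_{tt}-\Delta_{\hor}\phi-\phi_t\big)\,\mathrm{d}\eta\,\mathrm{d}t,
\]
the reversed sign of the damping term producing the adjoint combination $\phi_{tt}-\Delta_{\hor}\phi-\phi_t$. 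I would then select a test function adapted to the anisotropic parabolic scaling of the problem, $\phi_R(t,\eta)=[\Phi(R^{-2}t,R^{-1}x,R^{-1}y,R^{-2}\tau)]^{m}$ for $\eta=(x,y,\tau)$, where $\Phi$ is a fixed smooth cut-off equal to $1$ near the origin, supported in a set comparable to $[0,2]\times\mathcal{D}_2$, with time profile constant near $t=0$, and $m\geq 2p'$ a sufficiently large exponent ($p'=p/(p-1)$). The constancy in $t$ near $0$ gives $\phi_{R,t}(0,\cdot)\equiv 0$, so the term containing $u_0$ drops out; moreover $\phi_R(0,\cdot)\equiv 1$ on $\mathcal{D}_R$, and since $u_0+u_1\in L^1(\mathbf{H}_n)$ the mass outside $\mathcal{D}_R$ is negligible for large $R$, so assumption \eqref{assumption intial data TFM} yields $\int_{\mathbf{H}_n}(u_0+u_1)\phi_R(0,\cdot)\,\mathrm{d}\eta\geq c>0$ for all $R$ large.

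The second step consists of the scaling estimates. Because $\Delta_{\hor}$ is homogeneous of degree $2$ under the dilations $\delta_\lambda(x,y,\tau)=(\lambda x,\lambda y,\lambda^2\tau)$, inspection of \eqref{def sub laplacian} shows that $\phi_{R,t}$, $\Delta_{\hor}\phi_R$ and $\phi_{R,tt}$ are all $O(R^{-2})$ on $\supp\phi_R$: the factors $|x|^2+|y|^2\lesssim R^2$ and $|x_j|,|y_j|\lesssim R$ appearing in \eqref{def sub laplacian} precisely compensate the $R^{-4}$ and $R^{-3}$ produced by $\partial_\tau^2$ and $\partial^2_{x_j\tau}$. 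Taking $m$ large so that $\Phi^{m-2}\leq\Phi^{m/p}$, I obtain $|\phi_{R,tt}-\Delta_{\hor}\phi_R-\phi_{R,t}|\lesssim R^{-2}\phi_R^{1/p}$, with the derivatives supported on the annular region $\Omega_R$ where $\Phi$ is not locally constant. Writing $I_R\doteq\int_0^\infty\!\int_{\mathbf{H}_n}|u|^p\phi_R$ and applying H\"older's inequality with exponents $p,p'$ over $\Omega_R$, together with $|\Omega_R|\lesssim R^{\mathcal{Q}+2}$ (an $R^2$ from $t$, an $R^{2n}$ from $(x,y)$ and an $R^2$ from $\tau$, with $\mathcal{Q}=2n+2$), the weak identity gives
\[
I_R+\int_{\mathbf{H}_n}(u_0+u_1)\phi_R(0,\cdot)\,\mathrm{d}\eta\ \lesssim\ R^{-2}\,I_R^{1/p}\,|\Omega_R|^{1/p'}\ \lesssim\ R^{\,(\mathcal{Q}+2)/p'-2}\,I_R^{1/p}.
\]
Solving for $I_R$ yields $I_R\lesssim R^{E}$ with $E=(\mathcal{Q}+2)-2p'$, and one checks that $E<0$ precisely for $1<p<p_{\Fuj}(\mathcal{Q})$ while $E=0$ for $p=p_{\Fuj}(\mathcal{Q})$. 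In the subcritical range $E<0$ the right-hand side above tends to $0$ as $R\to\infty$, contradicting the lower bound $c>0$ for the data term; hence $T<\infty$.

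The critical case $p=p_{\Fuj}(\mathcal{Q})$, where $E=0$, is the main obstacle and requires a refinement. Here the inequality only yields $I_R\lesssim 1$ uniformly in $R$, which by monotone convergence shows that $u\in L^p([0,\infty)\times\mathbf{H}_n)$, so that the tail $\int\!\int_{\Omega_R}|u|^p\,\mathrm{d}\eta\,\mathrm{d}t\to0$ as $R\to\infty$ since $\Omega_R$ escapes to infinity. Keeping the localisation of the derivative terms to $\Omega_R$ when applying H\"older's inequality, I obtain the sharper bound
\[
\int_{\mathbf{H}_n}(u_0+u_1)\phi_R(0,\cdot)\,\mathrm{d}\eta\ \lesssim\ R^{-2}\Big(\int\!\!\int_{\Omega_R}|u|^p\phi_R\Big)^{1/p}|\Omega_R|^{1/p'}\ \lesssim\ \Big(\int\!\!\int_{\Omega_R}|u|^p\Big)^{1/p},
\]
where the last step uses $R^{-2}|\Omega_R|^{1/p'}\approx 1$ at the critical exponent. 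The right-hand side tends to $0$ while the left-hand side stays bounded below by $c>0$, giving again a contradiction, so $T<\infty$. The delicate points throughout are to verify the scaling estimate for $\Delta_{\hor}\phi_R$ honestly from \eqref{def sub laplacian} (the anisotropy of the sub-Laplacian is exactly matched by the anisotropic dilation) and, in the critical case, to exploit the finiteness of the global space–time $L^p$ norm to force the boundary-layer contribution on $\Omega_R$ to vanish.
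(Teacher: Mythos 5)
Your proposal is correct and follows essentially the same route as the paper: the weak formulation with the adjoint combination $\varphi_{tt}-\Delta_{\hor}\varphi-\varphi_t$, a test function with the anisotropic scaling $R$ in $(x,y)$ and $R^2$ in $(t,\tau)$, the observation that the polynomial coefficients in $\Delta_{\hor}$ exactly offset the extra powers of $R^{-1}$ from the $\tau$-derivatives, H\"older's inequality giving the exponent $(\mathcal{Q}+2)/p'-2$, and the critical-case refinement via $u\in L^p([0,\infty)\times\mathbf{H}_n)$ plus dominated convergence on the derivative-support regions escaping to infinity. The only cosmetic difference is that you raise a single cut-off to a large power $m\geq 2p'$, whereas the paper uses separable bump functions $\alpha,\beta$ satisfying $|\partial\alpha|\lesssim\alpha^{1/p}$; these are equivalent devices.
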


The next sections are organized as follows: in Section \ref{Section Overview} we explain the strategy for the proofs of Theorems \ref{thm local existence} and \ref{thm glob exi exp data} and we derive some  important estimates by using some remarkable properties of the function $\psi$; in Sections \ref{Section GN ineq} and \ref{Section Linear estimates} we derive a weighted version of the Gagliardo-Nirenberg inequality on $\mathbf{H}_n$ and we recall $L^2(\mathbf{H}_n)$ - $L^2(\mathbf{H}_n)$ estimates (with possible additional $L^1(\mathbf{H}_n)$ regularity) for the solution of \eqref{linear CP}, respectively; then, we prove Theorems \ref{thm local existence} and \ref{thm glob exi exp data} in Section \ref{LE Section} and in Section \ref{SDGE Section}, respectively; finally, we prove the blow-up result in Section \ref{Section Blow-up}.

\section{Overview on our approach} \label{Section Overview}

We apply Duhamel's principle in order to write the solution to \eqref{CP semilinear 1}. Because the linear equation related to the semi-linear equation in \eqref{CP semilinear 1} is invariant by time translations, we need to derive decay estimates  for  linear Cauchy problem
\begin{align}\label{linear CP}
\begin{cases}
u_{tt}-\Delta_{\hor} u+ u_t=0, & t> 0, \,\, \eta\in \mathbf{H}_n, \\ u(0,\eta)=u_0(\eta), & \eta\in\mathbf{H}_n, \\ u_t(0,\eta)=u_1(\eta), & \eta\in\mathbf{H}_n.
\end{cases}
\end{align} 
Let us fix now some notations for the linear Cauchy problem \eqref{linear CP}.
We denote by $E_0(t,\eta), E_1(t,\eta)$ the fundamental solutions to the Cauchy problem \eqref{linear CP}, i.e., the distributional solutions with data $(u_0,u_1)=(\delta_0,0)$ and $(u_0,u_1)=(0,\delta_0)$, respectively, where $\delta_0$ is the Dirac distribution in the $\eta$ variable. Also, if we denote by $\ast_{(\eta)}$ the group convolution with respect to the $\eta$ variable, we may represent the solution to the Cauchy problem \eqref{linear CP} as 
\begin{align*}
u(t,\eta)=u_0(\eta)\ast_{(\eta)} E_0(t,\eta)+u_1(\eta)\ast_{(\eta)} E_1(t,\eta).
\end{align*}
According to Duhamel's principle adapted to the case of Lie groups, we get 
\begin{align}\label{Duhamel term}
u(t,\eta)=\int_0^t F(s,\eta)  \ast_{(\eta)} E_1(t-s,\eta) \, \mathrm{d} s
\end{align} as mild solution to the inhomogeneous Cauchy problem 
\begin{align*}
\begin{cases}
u_{tt}-\Delta_{\hor} u+ u_t=F(t,\eta), & t> 0, \,\, \eta\in \mathbf{H}_n, \\ u(0,\eta)=0, & \eta\in\mathbf{H}_n, \\ u_t(0,\eta)=0, & \eta\in\mathbf{H}_n.
\end{cases}
\end{align*} In particular, we used the fact that the identity $L(v\ast_{(\eta)} E_1)=v\ast_{(\eta)} L( E_1)$ holds for any left invariant differential operator $L$ on $\mathbf{H}_n$. 

Therefore, we consider as mild solutions to \eqref{CP semilinear 1} on $(0,T)\times \mathbf{H}_n$ any fixed point of the nonlinear integral operator $N$ defined as follows:
\begin{align}\label{definition N operator}
u\in X(T)\to Nu(t,\eta)\doteq u_0(\eta)\ast_{(\eta)} E_0(t,\eta) +u_1(\eta)\ast_{(\eta)} E_1(t,\eta) +\int_0^t |u(s,\eta)|^p \ast_{(\eta)} E_1(t-s,\eta) \, \mathrm{d}s
\end{align} for a suitably chosen space $X(T)$ (here $T$ denotes the lifespan of the solution).

In particular, in Theorem \ref{thm glob exi exp data} the global in time solutions we are interested in are solution in $X(T)$ to the integral equation 
 \begin{align*}
 u(t,\eta)=u_0(\eta)\ast_{(\eta)} E_0(t,\eta) +u_1(\eta)\ast_{(\eta)} E_1(t,\eta) +\int_0^t |u(s,\eta)|^p \ast_{(\eta)} E_1(t-s,\eta) \, \mathrm{d}s
 \end{align*} which can be extended for all positive times.

Also, one difficulty in the proof of the local existence result for large data and of the  global existence result for small data, respectively, consists in the choice of the space $X(T)$.
In this paper, we restrict our consideration to the weighted energy space
\begin{align*}
X(T)=\mathcal{C}([0,T],H^1_{\psi(t,\cdot)}(\mathbf{H}_n))\cap \mathcal{C}^1([0,T],L^2_{\psi(t,\cdot)}(\mathbf{H}_n)),
\end{align*} both in Theorem \ref{thm local existence} and in Theorem \ref{thm glob exi exp data}. As we will see, the crucial difference lies in the choice of the norm for $X(T)$ (cf. Section \ref{LE Section} and Section \ref{SDGE Section}).

We analyze now some properties of the function $\psi$, defined in \eqref{definition of psi}, that will be useful in the proof of our main results.
Straightforward computations lead to 
\begin{align*}
\psi_t(t,\eta) & = - \frac{|x|^2+|y|^2+4|\tau|}{8(1+t)^2}, \ \
X_j \psi(t,\eta)  = \frac{x_j-\sign(\tau)y_j}{4(1+t)}, \ \ Y_j \psi(t,\eta)  = \frac{y_j+\sign(\tau)x_j}{4(1+t)}
\end{align*} for any $j=1,\cdots,n$. Let us point out explicitly that in the following we consider weak derivatives, so, the previous expressions for $X_j \psi$ and $Y_j \psi$ are in the sense of distributions.
Consequently, the following inequalities are satisfied
\begin{align}
|\nabla_{\hor} \psi(t,\eta)|^2+\psi_t(t,\eta) & = -\frac{|\tau |}{2(1+t)^2} \leq 0, \label{prop grad psi}\\
\Delta_{\hor}\psi(t,\eta)&=\frac{n}{2(1+t)}+ \frac{|x|^2+|y|^2}{4(1+t)}\delta_0(\tau)\label{prop lapl psi}
\end{align}
for any $t\geq 0$ and any $\eta\in\mathbf{H}_n$, where $\delta_0(\tau)$ denotes the Dirac delta in 0 with respect to the $\tau$ variable. 
 We derive now some fundamental relations that will play a crucial role in the next sections.
  The first one is the identity
\begin{align}
\mathrm{e}^{2\psi}u_t\left(u_{tt}-\Delta_{\hor} u +u_t\right) & =\frac{\partial}{\partial t}\left(\frac{\mathrm{e}^{2\psi}}{2}\left(|u_t|^2+|\nabla_{\hor} u|^2\right)\right)-\diver(\mathrm{e}^{2\psi}u_t\nabla_{\hor} u)\notag +\frac{\mathrm{e}^{2\psi}}{\psi_t}u_t^2\left(|\nabla_{\hor}\psi|^2+\psi_t\right) \\ & \quad -\psi_t \mathrm{e}^{2\psi}u_t^2 -\frac{\mathrm{e}^{2\psi}}{\psi_t}|u_t\nabla_{\hor}\psi-\psi_t\nabla_{\hor} u|^2 ,\label{fundamental equality 1} 
\end{align} where $|\nabla_{\hor} v|^2 = \sum_{j=1}^n\left( |X_j v|^2+|Y_j v|^2\right)$ is the Euclidean norm of $\nabla_{\hor} v$ provided that we use the identification $\nabla_{\hor} v \simeq (X_1u, \cdots, X_n u,Y_1u,\cdots, Y_n u) : \mathbf{H}_n \to \mathbb{R}^{2n}$ or, equivalently, we consider on each fiber of the horizontal subbundle $\mathrm{H}_\eta \mathbf{H}_n$ the norm induced by the scalar product $\langle \cdot, \cdot \rangle_\eta$. Let us verify the validity of \eqref{fundamental equality 1}. Using the fact that the sub-Laplacian can be expressed as the divergence of the horizontal gradient and the identity $$\diver (\alpha X) = \alpha \diver X +  X( \alpha) $$ for any $\alpha\in \mathcal{C}^1(\mathbf{H}_n)$ and any horizontal vector field $X$ on $\mathbf{H}_n$, we get
\begin{align}
\mathrm{e}^{\psi}u_t \, \Delta_{\hor} u &=\mathrm{e}^{\psi}u_t \diver (\nabla_{\hor} u)=\diver(\mathrm{e}^{2\psi}u_t\nabla_{\hor} u)-(\nabla_{\hor} u)(\mathrm{e}^{2\psi}u_t) \notag \\
& =\diver(\mathrm{e}^{2\psi}u_t\nabla_{\hor} u)-\Big(\sum_{j=1}^n X_j(u) X_j+Y_j(u) Y_j\Big)(\mathrm{e}^{2\psi}u_t) \notag\\
& =\diver(\mathrm{e}^{2\psi}u_t\nabla_{\hor} u)-\sum_{j=1}^n X_j(u)\big(\mathrm{e}^{2\psi} X_j(u_t)+2\mathrm{e}^{2\psi}u_tX_j(\psi)\big)- \sum_{j=1}^n Y_j(u)\big(\mathrm{e}^{2\psi} Y_j(u_t)+2\mathrm{e}^{2\psi}u_tY_j(\psi)\big). \label{diver grad hor}
%\\ old &= \diver(\mathrm{e}^{2\psi}u_t\nabla_{\hor} u)-2u_t \mathrm{e}^{2\psi}\nabla_{\hor}\psi\cdot \nabla_{\hor} u-\mathrm{e}^{2\psi}\nabla_{\hor} u_t\cdot \nabla_{\hor} u\\&=\diver(\mathrm{e}^{2\psi}u_t\nabla_{\hor} u)-2u_t \mathrm{e}^{2\psi}\nabla_{\hor}\psi\cdot \nabla_{\hor} u-\frac{\mathrm{e}^{2\psi}}{2}\partial_t |\nabla_{\hor} u|^2\\ &=\diver(\mathrm{e}^{2\psi}u_t\nabla_{\hor} u)-2u_t \mathrm{e}^{2\psi}\nabla_{\hor}\psi\cdot \nabla_{\hor} u-\frac{\partial}{\partial t}\left(\frac{\mathrm{e}^{2\psi}}{2}|\nabla_{\hor} u|^2\right)+\psi_t \mathrm{e}^{2\psi}|\nabla_{\hor} u|^2, 
\end{align}
Since 
\begin{align*}
\sum_{j=1}^n X_j(u) X_j(u_t) \,  \mathrm{e}^{2\psi}  & = \sum_{j=1}^n  \frac{\mathrm{e}^{2\psi}}{2} \,  \frac{\partial}{\partial t} |X_j(u)|^2   \\ & = \sum_{j=1}^n  \bigg( \frac{\partial}{\partial t} \bigg( \frac{ \mathrm{e}^{2\psi}}{2} \, |X_j(u)|^2  \bigg) - \psi_t\mathrm{e}^{2\psi} |X_j(u)|^2  \bigg)
\end{align*} and, analogously, 
\begin{align*}
\sum_{j=1}^n Y_j(u) Y_j(u_t) \,  \mathrm{e}^{2\psi}  &=\sum_{j=1}^n  \bigg( \frac{\partial}{\partial t} \bigg( \frac{ \mathrm{e}^{2\psi}}{2} \, |Y_j(u)|^2  \bigg) - \psi_t\mathrm{e}^{2\psi} |Y_j(u)|^2  \bigg),
\end{align*} it follows
\begin{align} \label{piece 1}
\sum_{j=1}^n \big( X_j(u) X_j(u_t) \,  \mathrm{e}^{2\psi}+  Y_j(u) Y_j(u_t) \,  \mathrm{e}^{2\psi} \big) =  \frac{\partial}{\partial t} \bigg( \frac{ \mathrm{e}^{2\psi}}{2} \, |\nabla_{\hor} u|^2  \bigg) - \psi_t\mathrm{e}^{2\psi} |\nabla_{\hor} u|^2. 
\end{align} On the other hand, using the (0,2) symmetric tensor $\langle \cdot,\cdot\rangle$ on $\mathbf{H}_n$, whose restriction to each fiber $\mathrm{H}_\eta \mathbf{H}_n$ of the horizontal subbundle is the scalar product $\langle \cdot,\cdot\rangle_{\eta}$ with orthonormal basis given by the canonical generators of the horizontal layer (cf. Subsection \ref{Subsection Heisenberg}), we have
\begin{align*}
\left\langle u_t \nabla_{\hor} \psi , \psi_t  \nabla_{\hor} u \right\rangle = \sum_{j=1}^n \big(u_t \psi_t X_j(\psi) X_j(u)+ u_t \psi_t Y_j(\psi) Y_j(u)\big). 
\end{align*} Consequently,
\begin{align}
2\sum_{j=1}^n \mathrm{e}^{2\psi} u_t  X_j(\psi) X_j(u)+ \mathrm{e}^{2\psi} u_t  Y_j(\psi) Y_j(u) & = 2 \, \frac{\mathrm{e}^{2\psi}}{\psi_t} \left\langle u_t \nabla_{\hor} \psi , \psi_t  \nabla_{\hor} u \right\rangle \notag \\
& =  \frac{\mathrm{e}^{2\psi}}{\psi_t} \Big(  u_t^2 |\nabla_{\hor} \psi|^2 + \psi_t^2  |\nabla_{\hor} u |^2 -|u_t\nabla_{\hor} \psi-\psi_t\nabla_{\hor} u |^2\Big). \label{piece 2}
\end{align}
Combining \eqref{diver grad hor}, \eqref{piece 1} and \eqref{piece 2}, we get 
\begin{align}
\mathrm{e}^{\psi}u_t \, \Delta_{\hor} u &= \diver(\mathrm{e}^{2\psi}u_t\nabla_{\hor} u) - \frac{\partial}{\partial t} \bigg( \frac{ \mathrm{e}^{2\psi}}{2} \, |\nabla_{\hor} u|^2  \bigg)  + \frac{\mathrm{e}^{2\psi}}{\psi_t} \Big( |u_t\nabla_{\hor} \psi-\psi_t\nabla_{\hor} u |^2 - u_t^2 |\nabla_{\hor} \psi|^2 \Big). \label{piece 3}
\end{align}
%\begin{align*}
%2\mathrm{e}^{2\psi}u_t\nabla_{\hor} \psi \cdot \nabla_{\hor} u &=\frac{2\mathrm{e}^{2\psi}}{\psi_t}(u_t\nabla_{\hor}\psi)\cdot (\psi_t \nabla_{\hor} u)=\frac{\mathrm{e}^{2\psi}}{\psi_t}\left(u_t^2|\nabla_{\hor}\psi|^2+\psi_t^2|\nabla_{\hor} u|^2-|u_t\nabla_{\hor}\psi-\psi_t\nabla_{\hor} u|^2\right),
%\end{align*} 
Furthermore,
\begin{align}
\mathrm{e}^{\psi}u_t u_{tt}&=\frac{\mathrm{e}^{2\psi}}{2}\partial_t |u_t|^2=\frac{\partial}{\partial t}\left(\frac{\mathrm{e}^{2\psi}}{2}|u_t|^2\right)-\psi_t \mathrm{e}^{2\psi}u_t^2. \label{piece 4}
\end{align} By \eqref{piece 3} and \eqref{piece 4} we find immediately \eqref{fundamental equality 1}.

The second fundamental relation is the upcoming inequality, that is  obtained by plugging the nonlinear term on the left hand side of \eqref{fundamental equality 1}. If $u$ is a solution of the equation \eqref{CP semilinear 1}, since $$\mathrm{e}^{2\psi}u_t|u|^p=\mathrm{e}^{2\psi}\partial_t\left(\frac{|u|^pu}{p+1}\right)=\frac{\partial}{\partial t}\left(\mathrm{e}^{2\psi}\frac{|u|^pu}{p+1}\right)-2\psi_t \mathrm{e}^{2\psi}\frac{|u|^pu}{p+1},$$ then, from \eqref{prop grad psi} we get immediately
\begin{align}
&\frac{\partial}{\partial t}\left(\frac{\mathrm{e}^{2\psi}}{2}\left(|u_t|^2+|\nabla_{\hor} u|^2\right)-\mathrm{e}^{2\psi}\frac{|u|^pu}{p+1}\right)\notag\\&\quad =\diver(\mathrm{e}^{2\psi}u_t\nabla_{\hor} u)- \frac{\mathrm{e}^{2\psi}}{\psi_t}u_t^2\left(|\nabla_{\hor}\psi|^2+\psi_t\right)+\psi_t \mathrm{e}^{2\psi} u_t^2+\frac{\mathrm{e}^{2\psi}}{\psi_t}|u_t\nabla_{\hor}\psi-\psi_t\nabla_{\hor} u|^2  -2\psi_t \mathrm{e}^{2\psi}\frac{|u|^pu}{p+1}\notag\\ &\quad \leq \diver(\mathrm{e}^{2\psi}u_t\nabla_{\hor} u)-2\psi_t \mathrm{e}^{2\psi}\frac{|u|^pu}{p+1},\label{fundamental equality 2}
\end{align} where in the last inequality we used \eqref{prop grad psi} and $\psi_t\leq 0$.
We stress that  in Sections \ref{LE Section} and \ref{SDGE Section} an important role in the derivation of weighted energy estimates will be played by \eqref{fundamental equality 1} and \eqref{fundamental equality 2}.

\section{Gagliardo-Nirenberg type inequalities} \label{Section GN ineq}

In the proof of Theorems \ref{thm local existence} and \ref{thm glob exi exp data}, we make use of the following inequalities of Gagliardo-Nirenberg type. We begin with the Gagliardo-Nirenberg inequality in $\mathbf{H}_n$ (cf. \cite{CM13,RT18}).

\begin{lemma} \label{Lemma dis GN Hn} Let $n\geq 1$. Let us consider $2\leq q \leq 2+\frac{2}{n}= \frac{2\mathcal{Q}}{\mathcal{Q}-2}$. Then, the following Gagliardo-Nirenberg inequality holds
\begin{align*}
\| v\|_{L^q(\mathbf{H}_n)} \leq C \,  \| \nabla_{\hor} v\|_{L^2(\mathbf{H}_n)}^{\theta(q)}\| v\|_{L^2(\mathbf{H}_n)}^{1-\theta(q)}
\end{align*} for any $v\in H^1(\mathbf{H}_n)$, where  where $C$ is a nonnegative constant and $\theta(q)\in [0,1]$ is defined by
\begin{align}\label{def theta(q)}
\theta(q)\doteq \mathcal{Q} \left(\tfrac{1}{2}-\tfrac{1}{q}\right).
\end{align}
\end{lemma}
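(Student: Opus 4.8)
The plan is to reduce the whole family of inequalities, indexed by $q\in[2,q^*]$ with $q^*\doteq\frac{2\mathcal{Q}}{\mathcal{Q}-2}$, to a single endpoint Sobolev (Folland--Stein) inequality on the Heisenberg group combined with a H\"{o}lder interpolation. First I would dispose of the two endpoints. When $q=2$ one has $\theta(2)=0$, and the claimed inequality is just the trivial bound $\|v\|_{L^2(\mathbf{H}_n)}\le C\|v\|_{L^2(\mathbf{H}_n)}$. When $q=q^*$ a direct computation gives $\frac12-\frac{1}{q^*}=\frac{1}{\mathcal{Q}}$, hence $\theta(q^*)=\mathcal{Q}\big(\frac12-\frac{1}{q^*}\big)=1$, so the assertion collapses to the critical embedding $\|v\|_{L^{q^*}(\mathbf{H}_n)}\le C\|\nabla_{\hor} v\|_{L^2(\mathbf{H}_n)}$. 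This is precisely the $L^2$-based Folland--Stein inequality on $\mathbf{H}_n$ at the critical exponent, which I would invoke as a known result (cf.\ the references already cited).

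For the intermediate range $2<q<q^*$ I would interpolate. Writing $\frac{1}{q}$ as a convex combination $\frac{1}{q}=\frac{1-\lambda}{2}+\frac{\lambda}{q^*}$ with $\lambda\in(0,1)$, H\"{o}lder's inequality (with conjugate exponents $\frac{2}{q(1-\lambda)}$ and $\frac{q^*}{q\lambda}$) yields $\|v\|_{L^q(\mathbf{H}_n)}\le\|v\|_{L^2(\mathbf{H}_n)}^{1-\lambda}\,\|v\|_{L^{q^*}(\mathbf{H}_n)}^{\lambda}$. Applying the critical Sobolev estimate from the previous step to the last factor gives $\|v\|_{L^q(\mathbf{H}_n)}\lesssim\|v\|_{L^2(\mathbf{H}_n)}^{1-\lambda}\,\|\nabla_{\hor} v\|_{L^2(\mathbf{H}_n)}^{\lambda}$. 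It then remains to check that the interpolation parameter coincides with the stated exponent: solving for $\lambda$ gives $\lambda=\big(\tfrac12-\tfrac1q\big)\big/\big(\tfrac12-\tfrac{1}{q^*}\big)=\mathcal{Q}\big(\tfrac12-\tfrac1q\big)=\theta(q)$, again using $\frac12-\frac{1}{q^*}=\frac{1}{\mathcal{Q}}$. Since $\lambda\in[0,1]$ throughout, the membership $\theta(q)\in[0,1]$ is automatic.

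The only substantive input is the endpoint Folland--Stein inequality on $\mathbf{H}_n$; everything else is H\"{o}lder interpolation and bookkeeping of exponents, so I do not anticipate a genuine obstacle. The one point worth recording is that the constant $C$ in the statement absorbs the critical Sobolev constant raised to the power $\lambda=\theta(q)$, so it may depend on $q$ but stays finite uniformly on $[2,q^*]$.
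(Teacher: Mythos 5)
Your proposal is correct, but it is worth noting that the paper itself does not prove this lemma at all: it is imported as a black box from the literature (the citations to Chen--Rocha and Ruzhansky--Tokmagambetov in Section \ref{Section GN ineq}). What you have written is the standard derivation that those references essentially rest on: dispose of the endpoint $q=2$ trivially, identify $\theta(q^*)=1$ at $q^*=\frac{2\mathcal{Q}}{\mathcal{Q}-2}$ with the $L^2$-based Folland--Stein critical embedding $\|v\|_{L^{q^*}(\mathbf{H}_n)}\leq C\|\nabla_{\hor}v\|_{L^2(\mathbf{H}_n)}$, and fill in the intermediate range by H\"older interpolation, checking that the interpolation parameter $\lambda=\mathcal{Q}\bigl(\tfrac12-\tfrac1q\bigr)$ is exactly $\theta(q)$. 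Your exponent bookkeeping is right (in particular $\tfrac12-\tfrac{1}{q^*}=\tfrac{1}{\mathcal{Q}}$, and the H\"older exponents $\tfrac{2}{q(1-\lambda)}$, $\tfrac{q^*}{q\lambda}$ are indeed conjugate), and the observation that the constant is $C_{\mathrm{FS}}^{\theta(q)}$, hence uniformly bounded over $q\in[2,q^*]$, is a nice touch the paper never records. The one point you should make explicit is the pedigree of your only substantive input: the Folland--Stein inequality is usually stated for smooth compactly supported functions (or for the homogeneous Sobolev space), so its validity for all $v\in H^1(\mathbf{H}_n)$ requires a density argument; this is routine but should be said, since the lemma is asserted for the full inhomogeneous space. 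With that remark added, your argument buys something the paper does not offer: a self-contained proof reducing the whole family of inequalities to a single classical endpoint estimate, rather than an appeal to two external references.
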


\begin{lemma}\label{1 lemma GN with weight} Let $\sigma>0$, $t\geq 0$. Then, the following estimate 
\begin{align*}
2^{-1} \sigma n \, (1+t)^{-1}\|\mathrm{e}^{\sigma\psi(t,\cdot)}v\|^2_{L^2(\mathbf{H}_n)}+\|\nabla_{\hor}(\mathrm{e}^{\sigma\psi(t,\cdot)}v)\|^2_{L^2(\mathbf{H}_n)}\leq \|\mathrm{e}^{\sigma\psi(t,\cdot)}\nabla_{\hor} v\|^2_{L^2(\mathbf{H}_n)}
\end{align*} holds for any $v\in H^1_{\sigma\psi(t,\cdot)}(\mathbf{H}_n)$.
\end{lemma}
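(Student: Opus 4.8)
The plan is to pass to the weighted function $w \doteq \mathrm{e}^{\sigma\psi(t,\cdot)} v$ and to compare $\|\mathrm{e}^{\sigma\psi(t,\cdot)}\nabla_{\hor} v\|_{L^2(\mathbf{H}_n)}^2$ with $\|\nabla_{\hor} w\|_{L^2(\mathbf{H}_n)}^2$ by means of the Leibniz rule for the horizontal derivatives. Since $X_j w = \mathrm{e}^{\sigma\psi}X_j v + \sigma (X_j\psi)\, w$ and likewise for $Y_j$, one obtains the distributional identity $\mathrm{e}^{\sigma\psi}\nabla_{\hor} v = \nabla_{\hor} w - \sigma(\nabla_{\hor}\psi)\, w$, where I recall that $X_j\psi,\, Y_j\psi$ are the weak derivatives computed earlier. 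First I would fix $t\geq 0$ and $\sigma>0$ and carry out the computation for $v\in\mathcal{C}^\infty_0(\mathbf{H}_n)$, extending it to a general $v\in H^1_{\sigma\psi(t,\cdot)}(\mathbf{H}_n)$ afterwards by a density argument.

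Squaring the previous identity fiberwise and expanding gives the pointwise relation
\begin{align*}
|\mathrm{e}^{\sigma\psi}\nabla_{\hor} v|^2 = |\nabla_{\hor} w|^2 - \sigma\,\langle \nabla_{\hor}\psi, \nabla_{\hor}(w^2)\rangle + \sigma^2\, |\nabla_{\hor}\psi|^2\, w^2 ,
\end{align*} where I used $2\,w\,\langle\nabla_{\hor}\psi,\nabla_{\hor} w\rangle = \langle\nabla_{\hor}\psi, \nabla_{\hor}(w^2)\rangle$. Integrating over $\mathbf{H}_n$, the crucial step is to integrate by parts the cross term, transferring the horizontal gradient onto $\nabla_{\hor}\psi$ through the relation $\Delta_{\hor}\psi = \diver(\nabla_{\hor}\psi)$:
\begin{align*}
-\sigma\int_{\mathbf{H}_n}\langle\nabla_{\hor}\psi,\nabla_{\hor}(w^2)\rangle\,\mathrm{d}\eta = \sigma\int_{\mathbf{H}_n}(\Delta_{\hor}\psi)\,w^2\,\mathrm{d}\eta ,
\end{align*} the boundary contributions vanishing thanks to the membership $v\in H^1_{\sigma\psi(t,\cdot)}(\mathbf{H}_n)$ after the density reduction. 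Hence
\begin{align*}
\|\mathrm{e}^{\sigma\psi}\nabla_{\hor} v\|_{L^2(\mathbf{H}_n)}^2 = \|\nabla_{\hor} w\|_{L^2(\mathbf{H}_n)}^2 + \sigma\int_{\mathbf{H}_n}(\Delta_{\hor}\psi)\,w^2\,\mathrm{d}\eta + \sigma^2\int_{\mathbf{H}_n}|\nabla_{\hor}\psi|^2\,w^2\,\mathrm{d}\eta .
\end{align*}

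Finally I would insert the explicit expression \eqref{prop lapl psi} for $\Delta_{\hor}\psi$. Its smooth part $\frac{n}{2(1+t)}$ contributes exactly $\frac{\sigma n}{2(1+t)}\|w\|_{L^2(\mathbf{H}_n)}^2$, while its singular part $\frac{|x|^2+|y|^2}{4(1+t)}\delta_0(\tau)$ pairs against $w^2\geq 0$ with a nonnegative coefficient, so it adds only a nonnegative quantity; likewise the last integral $\sigma^2\int_{\mathbf{H}_n}|\nabla_{\hor}\psi|^2 w^2\,\mathrm{d}\eta$ is nonnegative. Dropping these two nonnegative contributions and recalling $w = \mathrm{e}^{\sigma\psi(t,\cdot)}v$ yields the desired inequality. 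The one point deserving care is the justification of the integration by parts: because $\nabla_{\hor}\psi$ involves $\sign(\tau)$ and $\Delta_{\hor}\psi$ carries a Dirac mass on $\{\tau=0\}$, the manipulation must be read distributionally, and this is precisely the mechanism that produces the nonnegative singular term we then discard. I expect this distributional book-keeping, together with the density reduction ensuring decay of the boundary terms, to be the only genuinely delicate part of the argument, the remainder being the routine Leibniz expansion described above.
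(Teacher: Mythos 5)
Your proposal is correct and follows essentially the same route as the paper's proof: set $w=\mathrm{e}^{\sigma\psi}v$, expand $\|\mathrm{e}^{\sigma\psi}\nabla_{\hor}v\|_{L^2(\mathbf{H}_n)}^2$ via the Leibniz rule, integrate the cross term by parts to produce $\sigma\int_{\mathbf{H}_n}(\Delta_{\hor}\psi)\,w^2\,\mathrm{d}\eta$, and then use \eqref{prop lapl psi}, keeping the smooth part $\tfrac{n}{2(1+t)}$ and discarding the nonnegative Dirac and $\sigma^2$ contributions (the paper handles the same delicate points by a partition-of-unity/density argument for the weak integration by parts and by invoking trace theorems on $\{\tau=0\}$ for the singular term). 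The only cosmetic difference is that the paper drops the $\sigma^2\|w\nabla_{\hor}\psi\|_{L^2(\mathbf{H}_n)}^2$ term before integrating by parts rather than at the end.
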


\begin{proof}
Let us set $f=\mathrm{e}^{\sigma\psi}v$. Then, straightforward computations lead to
\begin{align*}
 \mathrm{e}^{\sigma\psi}\nabla_{\hor} v=\nabla_{\hor} f-\sigma f\nabla_{\hor} \psi.
\end{align*}
 Hence, 
\begin{align}
\|\mathrm{e}^{\sigma\psi(t,\cdot)}\nabla_{\hor} v\|^2_{L^2(\mathbf{H}_n)} & = \|\nabla_{\hor} f(t,\cdot)\|^2_{L^2(\mathbf{H}_n)}+\sigma^2 \|(f\nabla_{\hor} \psi)(t,\cdot)\|^2_{L^2(\mathbf{H}_n)}-2\sigma\big(\nabla_{\hor} f(t,\cdot), (f\nabla_{\hor}\psi)(t,\cdot)\big)_{L^2(\mathbf{H}_n)} \notag \\
& \geq  \|\nabla_{\hor} f(t,\cdot)\|^2_{L^2(\mathbf{H}_n)}-2\sigma\big(\nabla_{\hor} f(t,\cdot), (f\nabla_{\hor}\psi)(t,\cdot)\big)_{L^2(\mathbf{H}_n)}. \label{lower bound e sigma psi Nabla hor v}
\end{align}
Integrating by parts, we have

Note that we may integrate by parts 
\begin{align*}
\int_{\mathbf{H}_n} X_j g(\eta) \cdot h (\eta) \, \mathrm{d}\eta = - \int_{\mathbf{H}_n} g(\eta) \cdot  X_j h (\eta) \, \mathrm{d}\eta, \quad \int_{\mathbf{H}_n} Y_j g(\eta) \cdot h (\eta) \, \mathrm{d}\eta = - \int_{\mathbf{H}_n} g(\eta) \cdot  Y_j h (\eta) \, \mathrm{d}\eta
\end{align*} for any $g,h\in \mathcal{C}^1_0(\mathbf{H}_n)$ and for any $j=1,\cdots,n$. So, using a partition of the unity we may remove the compact support assumption while a density argument provides the result for weak derivatives.

\begin{align}
\big(\nabla_{\hor} f(t,\cdot), & (f \, \nabla_{\hor}\psi)(t,\cdot)\big)_{L^2(\mathbf{H}_n)} \notag \\ &= \sum_{j=1}^n \int_{\mathbf{H}_n} \big(f X_j f  X_j\psi + f Y_j f  \, Y_j\psi\big)(t,\eta) \, \mathrm{d} \eta = \frac{1}{2} \sum_{j=1}^n \int_{\mathbf{H}_n} \big( X_j |f|^2  X_j\psi +  Y_j |f|^2  \, Y_j\psi\big)(t,\eta) \, \mathrm{d} \eta\notag \\ 
& =- \frac{1}{2}\sum_{j=1}^n \int_{\mathbf{H}_n} \big( |f|^2  X^2_j \psi +  |f|^2  \, Y_j^2\psi\big)(t,\eta) \, \mathrm{d} \eta  =-\frac{1}{2}\int_{\mathbf{H}_n}\big(|f|^2 \Delta_{\hor}\psi\big)  (t,\eta) \, \mathrm{d}\eta \notag \\ &\leq -\frac{n}{4(1+t)}\|f(t,\cdot)\|^2_{L^2(\mathbf{H}_n)}, \label{L2 norm scalar prod}
\end{align} where in the last step we used \eqref{prop lapl psi}. Note that we may consider the trace of the function $|f|^2$ on the hypersurface with equation $\tau=0$, since the existence of trace operators is known in the literature for the Heisenberg group (cf. \cite{Pes94,BP99,BCX05,BCX09}).
Consequently, combining \eqref{lower bound e sigma psi Nabla hor v} and \eqref{L2 norm scalar prod}, we get the desired  estimate.
\end{proof}

\begin{lemma}\label{2 lemma GN with weight}  Let $n\geq 1$, $\sigma\in (0,1]$ and  $t\geq 0$. Let us consider $2\leq q \leq 2+\frac{2}{n}= \frac{2\mathcal{Q}}{\mathcal{Q}-2}$.  Then, the following weighted Gagliardo-Nirenberg inequality 
\begin{align}\label{GN inequality weighted}
\|\mathrm{e}^{\sigma\psi(t,\cdot)}v\|_{L^q(\mathbf{H}_n)}\leq C (1+t)^{(1-\theta(q))/2}\|\nabla_{\hor} v\|_{L^2(\mathbf{H}_n)}^{1-\sigma}\|\mathrm{e}^{\psi(t,\cdot)}\nabla_{\hor} v\|_{L^2(\mathbf{H}_n)}^{\sigma},
\end{align}  holds for any $v\in H^1_{\psi(t,\cdot)}(\mathbf{H}_n)$, where $C$ is a nonnegative constant and $\theta(q)$ is defined by \eqref{def theta(q)}.
\end{lemma}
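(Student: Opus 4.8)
The plan is to reduce the weighted inequality to the unweighted Gagliardo--Nirenberg inequality of Lemma~\ref{Lemma dis GN Hn} applied to the single function $f\doteq\mathrm{e}^{\sigma\psi(t,\cdot)}v$, to control the two resulting factors by the gradient estimate of Lemma~\ref{1 lemma GN with weight}, and to close the estimate with one application of H\"older's inequality.

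First I would note that, for each fixed $t$, the function $f=\mathrm{e}^{\sigma\psi(t,\cdot)}v$ lies in $H^1(\mathbf{H}_n)$: indeed $\|f\|_{L^2(\mathbf{H}_n)}=\|\mathrm{e}^{\sigma\psi(t,\cdot)}v\|_{L^2(\mathbf{H}_n)}<\infty$ because $\sigma\le 1$ and $\psi\ge 0$, while Lemma~\ref{1 lemma GN with weight} guarantees $\|\nabla_{\hor}f\|_{L^2(\mathbf{H}_n)}^2\le\|\mathrm{e}^{\sigma\psi(t,\cdot)}\nabla_{\hor}v\|_{L^2(\mathbf{H}_n)}^2<\infty$. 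Applying Lemma~\ref{Lemma dis GN Hn} to $f$ gives
\[
\|\mathrm{e}^{\sigma\psi(t,\cdot)}v\|_{L^q(\mathbf{H}_n)}\le C\,\|\nabla_{\hor}(\mathrm{e}^{\sigma\psi(t,\cdot)}v)\|_{L^2(\mathbf{H}_n)}^{\theta(q)}\,\|\mathrm{e}^{\sigma\psi(t,\cdot)}v\|_{L^2(\mathbf{H}_n)}^{1-\theta(q)}.
\]
Since both summands on the left-hand side of Lemma~\ref{1 lemma GN with weight} are nonnegative, discarding one at a time yields the two bounds $\|\nabla_{\hor}(\mathrm{e}^{\sigma\psi(t,\cdot)}v)\|_{L^2(\mathbf{H}_n)}\le\|\mathrm{e}^{\sigma\psi(t,\cdot)}\nabla_{\hor}v\|_{L^2(\mathbf{H}_n)}$ and $\|\mathrm{e}^{\sigma\psi(t,\cdot)}v\|_{L^2(\mathbf{H}_n)}\le(2/(\sigma n))^{1/2}(1+t)^{1/2}\|\mathrm{e}^{\sigma\psi(t,\cdot)}\nabla_{\hor}v\|_{L^2(\mathbf{H}_n)}$. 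Substituting both into the displayed inequality, the powers $\theta(q)$ and $1-\theta(q)$ of $\|\mathrm{e}^{\sigma\psi(t,\cdot)}\nabla_{\hor}v\|_{L^2(\mathbf{H}_n)}$ add up to $1$ while the factor $(1+t)^{1/2}$ is raised to the power $1-\theta(q)$, so that, after absorbing $(2/(\sigma n))^{(1-\theta(q))/2}$ into $C$, one obtains
\[
\|\mathrm{e}^{\sigma\psi(t,\cdot)}v\|_{L^q(\mathbf{H}_n)}\le C\,(1+t)^{(1-\theta(q))/2}\,\|\mathrm{e}^{\sigma\psi(t,\cdot)}\nabla_{\hor}v\|_{L^2(\mathbf{H}_n)}.
\]

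It then remains only to interpolate the weighted gradient norm. Here I would use the factorization $\mathrm{e}^{2\sigma\psi}|\nabla_{\hor}v|^2=\big(|\nabla_{\hor}v|^2\big)^{1-\sigma}\big(\mathrm{e}^{2\psi}|\nabla_{\hor}v|^2\big)^{\sigma}$ together with H\"older's inequality with the conjugate exponents $1/(1-\sigma)$ and $1/\sigma$ (the endpoint $\sigma=1$ being trivial), which gives
\[
\|\mathrm{e}^{\sigma\psi(t,\cdot)}\nabla_{\hor}v\|_{L^2(\mathbf{H}_n)}^2\le\|\nabla_{\hor}v\|_{L^2(\mathbf{H}_n)}^{2(1-\sigma)}\,\|\mathrm{e}^{\psi(t,\cdot)}\nabla_{\hor}v\|_{L^2(\mathbf{H}_n)}^{2\sigma};
\]
taking square roots and inserting this into the previous display produces exactly \eqref{GN inequality weighted}.

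The argument is essentially bookkeeping once the two lemmas are in hand. The only points needing a little care are the exponent of $(1+t)$, which arises solely from the $L^2$-factor and is pinned to $(1-\theta(q))/2$, and the admissibility of the H\"older exponents, which is precisely where the hypothesis $\sigma\in(0,1]$ enters. I do not expect any genuine obstacle, since the regularity of $f=\mathrm{e}^{\sigma\psi(t,\cdot)}v$ required to invoke Lemma~\ref{Lemma dis GN Hn} is already furnished by Lemma~\ref{1 lemma GN with weight}.
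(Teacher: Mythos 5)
Your proof is correct and follows essentially the same route as the paper: apply the unweighted Gagliardo--Nirenberg inequality of Lemma~\ref{Lemma dis GN Hn} to $f=\mathrm{e}^{\sigma\psi(t,\cdot)}v$, control $\|f\|_{L^2(\mathbf{H}_n)}$ and $\|\nabla_{\hor}f\|_{L^2(\mathbf{H}_n)}$ via Lemma~\ref{1 lemma GN with weight}, and finish with the H\"older interpolation $\|\mathrm{e}^{\sigma\psi(t,\cdot)}\nabla_{\hor}v\|_{L^2(\mathbf{H}_n)}\le\|\nabla_{\hor}v\|_{L^2(\mathbf{H}_n)}^{1-\sigma}\|\mathrm{e}^{\psi(t,\cdot)}\nabla_{\hor}v\|_{L^2(\mathbf{H}_n)}^{\sigma}$. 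The only cosmetic difference is ordering: the paper performs that H\"older interpolation at the outset (to verify $v\in H^1_{\sigma\psi(t,\cdot)}(\mathbf{H}_n)$ before invoking Lemma~\ref{1 lemma GN with weight}), whereas you secure membership by the pointwise bound $\mathrm{e}^{\sigma\psi}\le\mathrm{e}^{\psi}$ and interpolate at the end.
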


\begin{proof}
Let us prove first that  $v\in H^1_{\psi(t,\cdot)}(\mathbf{H}_n)$ implies $v\in H^1_{\sigma\psi(t,\cdot)}(\mathbf{H}_n)$ for any $\sigma\in (0,1]$.
 By H\"{o}lder's inequality we find
\begin{align}
\|\mathrm{e}^{\sigma\psi(t,\cdot)}\nabla_{\hor} v\|_{L^2(\mathbf{H}_n)}^2&=\int_{\mathbf{H}_n}\mathrm{e}^{2\sigma\psi(t,\eta)}|\nabla_{\hor} v(\eta)|^{2\sigma}|\nabla_{\hor} v(\eta)|^{2(1-\sigma)}\mathrm{d}\eta \notag \\&\leq \|\mathrm{e}^{2\sigma\psi(t,\cdot)}|\nabla_{\hor} v|^{2\sigma}\|_{L^\frac{1}{\sigma}(\mathbf{H}_n)} \||\nabla_{\hor} v|^{2(1-\sigma)}\|_{L^\frac{1}{1-\sigma}(\mathbf{H}_n)}\notag \\ &=\|\mathrm{e}^{\psi(t,\cdot)}\nabla_{\hor} v\|^{2\sigma}_{L^2(\mathbf{H}_n)}\|\nabla_{\hor} v\|^{2(1-\sigma)}_{L^2(\mathbf{H}_n)}.\label{2 lemma GN with weight 1 est}
\end{align}
 In a similar way, it results
\begin{align*}
\|\mathrm{e}^{\sigma\psi(t,\cdot)}v\|_{L^2(\mathbf{H}_n)}^2&=\int_{\mathbf{H}_n}\mathrm{e}^{2\sigma\psi(t,\eta)}| v(\eta)|^{2\sigma}| v(\eta)|^{2(1-\sigma)}\mathrm{d}\eta  \\&\leq \|\mathrm{e}^{2\sigma\psi(t,\cdot)}| v|^{2\sigma}\|_{L^\frac{1}{\sigma}(\mathbf{H}_n)} \|| v|^{2(1-\sigma)}\|_{L^\frac{1}{1-\sigma}(\mathbf{H}_n)} \\ & =\|\mathrm{e}^{\psi(t,\cdot)} v\|^{2\sigma}_{L^2(\mathbf{H}_n)}\| v\|^{2(1-\sigma)}_{L^2(\mathbf{H}_n)}.
\end{align*}
 So, we have that $f=\mathrm{e}^{\sigma\psi}v$ satisfies $f(t,\cdot)\in H^1(\mathbf{H}_n)$ and by Lemma \ref{1 lemma GN with weight}
\begin{align}
\|f(t,\cdot)\|_{L^2(\mathbf{H}_n)}&\lesssim (1+t)^{1/2}\|\mathrm{e}^{\sigma\psi(t,\cdot)}\nabla_{\hor} v\|_{L^2(\mathbf{H}_n)},\label{2 lemma GN with weight 2 est}\\ \|\nabla_{\hor} f(t,\cdot)\|_{L^2(\mathbf{H}_n)}&\leq \|\mathrm{e}^{\sigma\psi(t,\cdot)}\nabla_{\hor} v\|_{L^2(\mathbf{H}_n)} \label{2 lemma GN with weight 3 est}
\end{align}
for any $t\geq 0$. 
Applying the Gagliardo-Nirenberg inequality to $f(t,\cdot)$ from Lemma \ref{Lemma dis GN Hn}, we have 
\begin{align*}
\|f(t,\cdot)\|_{L^q(\mathbf{H}_n)}\lesssim \|f(t,\cdot)\|_{L^2(\mathbf{H}_n)}^{1-\theta(q)}\|\nabla_{\hor} f(t,\cdot)\|_{L^2(\mathbf{H}_n)}^{\theta(q)},
\end{align*} where $\theta(q)=\mathcal{Q}\big(\frac{1}{2}-\frac{1}{q}\big)$. 
Also, combining \eqref{2 lemma GN with weight 2 est} and \eqref{2 lemma GN with weight 3 est} with the last interpolative inequality, we obtain
\begin{align*}
\|f(t,\cdot)\|_{L^q(\mathbf{H}_n)} & \lesssim (1+t)^{(1-\theta(q))/2}\|\mathrm{e}^{\sigma\psi(t,\cdot)}\nabla_{\hor} v\|_{L^2(\mathbf{H}_n)} \\ &  \leq (1+t)^{(1-\theta(q))/2}\|\mathrm{e}^{\psi(t,\cdot)}\nabla_{\hor} v\|^{\sigma}_{L^2(\mathbf{H}_n)}\|\nabla_{\hor} v\|^{1-\sigma}_{L^2(\mathbf{H}_n)},
\end{align*} where in the last step we applied \eqref{2 lemma GN with weight 1 est}.
\end{proof}

\section{Local existence: proof of Theorem \ref{thm local existence}}\label{LE Section}

In the proof of Theorem \ref{thm local existence}, we employ the next result, which is a generalization to the non-linear case of Gronwall's lemma (cf. \cite[Section 3]{Bih56}).
\begin{lemma}[Bihari's inequality]\label{lemma of gronwall type} Let $k$ be a nonnegative, continuous function, $M$ a real constant and $g$ a continuous, non-decreasing, nonnegative function such that $$G(u)=\int_0^u \frac{ds}{g(s)}$$ is well-defined.
Let $y$ be a continuous function such that 
\begin{equation*}y(t)\leq M+\int_0^t k(s) g(y(s))ds %\qquad \mbox{ for any} \,\, t\geq 0.
\end{equation*} for any $t\geq 0$. Then, 
\begin{equation*}
G(y(t))\leq G(M)+\int_0^t k(s)ds %\qquad \mbox{ for any} \,\, t\geq 0.
\end{equation*} for any $t\geq 0$.
\end{lemma}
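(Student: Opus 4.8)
The plan is to follow the classical comparison strategy that reduces the nonlinear integral inequality to an ordinary differential inequality for an explicit majorant. First I would introduce the function
\[
z(t) \doteq M+\int_0^t k(s)\,g(y(s))\,\mathrm{d}s,
\]
so that the hypothesis reads $y(t)\le z(t)$ for every $t\ge 0$, while $z(0)=M$. Since $k$ is continuous and $s\mapsto g(y(s))$ is continuous (being the composition of the continuous functions $g$ and $y$), the integrand is continuous; hence $z$ is of class $\mathcal{C}^1$ and the fundamental theorem of calculus gives $z'(t)=k(t)\,g(y(t))$.

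Next I would exploit the monotonicity of $g$. Because $g$ is non-decreasing and $y(t)\le z(t)$, we have $g(y(t))\le g(z(t))$, and therefore $z'(t)\le k(t)\,g(z(t))$ for all $t\ge 0$. Dividing by $g(z(t))$ and recalling that $G'(u)=1/g(u)$, the left-hand side is precisely $\frac{\mathrm{d}}{\mathrm{d}t}G(z(t))$ by the chain rule, so that
\[
\frac{\mathrm{d}}{\mathrm{d}t}\,G(z(t))\le k(t).
\]
Integrating this inequality over $[0,t]$ and using $z(0)=M$ yields $G(z(t))\le G(M)+\int_0^t k(s)\,\mathrm{d}s$. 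Finally, since $G'=1/g\ge 0$ the function $G$ is non-decreasing, and from $y(t)\le z(t)$ I would conclude $G(y(t))\le G(z(t))\le G(M)+\int_0^t k(s)\,\mathrm{d}s$, which is the asserted estimate.

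The step I expect to be delicate is the division by $g(z(t))$, which is legitimate only where $g(z(t))>0$; this is exactly the point at which the standing hypothesis that $G$ be well-defined must be used. If $g$ is allowed to vanish, I would first replace $M$ by $M+\varepsilon$ (equivalently, work with the strictly larger majorant $z_\varepsilon(t)=M+\varepsilon+\int_0^t k(s)\,g(y(s))\,\mathrm{d}s$), run the argument on the range where $g>0$, and then let $\varepsilon\to 0^+$, invoking the continuity of $G$ and of the right-hand side to pass to the limit. One should also note that the conclusion holds on the maximal interval on which $z(t)$ stays in the domain of $G$; in the applications of Sections \ref{LE Section} and \ref{SDGE Section} this is ensured by the finiteness of $\int_0^t k(s)\,\mathrm{d}s$ together with the explicit form of $G$, so no further restriction is needed there.
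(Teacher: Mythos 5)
The paper does not actually prove this lemma: it is stated with a citation to Bihari's original 1956 article (cf.\ \cite[Section 3]{Bih56}), so there is no in-paper argument to compare against. Your proof is the standard comparison argument for Bihari's inequality and it is correct: introducing the majorant $z$, using monotonicity of $g$ to get $z'\le k\,g(z)$, recognizing $z'/g(z)$ as $\tfrac{\mathrm{d}}{\mathrm{d}t}G(z)$, integrating, and finally using that $G$ is non-decreasing is precisely the classical chain of reasoning. Moreover, your closing caveat about dividing by $g(z(t))$ is not a pedantic aside in this paper's context: in both applications in Section~\ref{LE Section} one takes $g(u)=(2u)^{1/2}$, which vanishes at $u=0$, and in the contraction estimate \eqref{stima intermedia lemma loc esistenza} the constant is $M=0$ (the difference $w$ has zero Cauchy data), so $z$ can vanish and the $\varepsilon$-regularization (replacing $M$ by $M+\varepsilon$ and letting $\varepsilon\to 0^{+}$ by continuity of $G$) is genuinely needed there. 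One further simplification you could note: since $g$ is non-decreasing, finiteness of $G$ forces $g>0$ on $(0,\infty)$ (if $g(s_0)=0$ for some $s_0>0$ then $g\equiv 0$ on $[0,s_0]$ and $G$ diverges), so the only possible obstruction to the division is $z(t)=0$, exactly the case your limiting argument handles.
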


Using a standard contraction argument we prove now Theorem \ref{thm local existence}, following the main ideas of \cite[Appendix A]{IT05}. Note that differently from the global existence result, in this case we do not have to require a lower bound for the exponent $p$. 

\begin{proof}[Proof of Theorem \ref{thm local existence}]
Let $T,K$ be positive constants on which  will be prescribed several conditions of suitability throughout this proof. We define 
\begin{align*}
B_{T,K}^\psi \doteq \{v\in \mathcal{C}([0,T],H^1(\mathbf{H}_n))\cap\mathcal{C}^1([0,T],L^2(\mathbf{H}_n)):\|v\|_T^\psi\leq K\},
\end{align*} where the norm $\|\cdot\|_{T}^\psi$ is defined by
\begin{align*}
\|v\|_T^\psi \doteq \sup_{t\in[0,T]}\left(\|\mathrm{e}^{\psi(t,\cdot) }v(t,\cdot)\|_{L^2(\mathbf{H}_n)}+\|\mathrm{e}^{\psi(t,\cdot)}\nabla_{\hor} v(t,\cdot)\|_{L^2(\mathbf{H}_n)}+\|\mathrm{e}^{\psi(t,\cdot) }v_t(t,\cdot)\|_{L^2(\mathbf{H}_n)}\right).
\end{align*}
We introduce the map 
\begin{align*}
\Phi: B_{T,K}^\psi&\longrightarrow \mathcal{C}([0,T],H^1(\mathbf{H}_n))\cap\mathcal{C}^1([0,T],L^2(\mathbf{H}_n)), \\  v&\longmapsto u=\Phi(v),
\end{align*} where $u$ solves the Cauchy problem
\begin{align*}
\begin{cases}
u_{tt} -\Delta_{\hor} u + u_t =|v|^p, & (t,\eta)\in (0,T)\times\mathbf{H}_n, \\  u(0,\eta)=u_0(\eta), & \eta\in \mathbf{H}_n,\\  u_t(0,\eta)=u_1(\eta), & \eta\in \mathbf{H}_n.
\end{cases}
\end{align*} We shall prove that, for a suitable choice of $T$ and $K$, $\Phi$ is a contraction map from $B_{T,K}^\psi$ to itself.
From \eqref{fundamental equality 1} it results
\begin{align*}
\mathrm{e}^{2\psi}u_t|v|^p\geq \frac{\partial}{\partial t}\bigg(\frac{\mathrm{e}^{2\psi}}{2}(u_t^2+|\nabla_{\hor} u|^2)\bigg)-\diver(\mathrm{e}^{2\psi}u_t\nabla_{\hor} u).
\end{align*}
So, introducing the \emph{weighted energy} of the function $u$
\begin{align}
\mathcal{E}_{\psi}[u](t)\doteq \frac{1}{2}\int_{\mathbf{H}_n}\mathrm{e}^{2\psi(t,\eta)}\Big(|u_t(t,\eta)|^2+|\nabla_{\hor} u(t,\eta)|^2\Big) \mathrm{d} \eta \label{def weighted energy}
\end{align}  and integrating over $[0,t]\times\mathbf{H}_n$ the last inequality, we have
\begin{align*}
\mathcal{E}_{\psi}[u](t)\leq\mathcal{E}_{\psi}[u](0)+\int_0^t\int_{\mathbf{H}_n}\mathrm{e}^{2\psi(s,\eta)}u_t(s,\eta)|v(s,\eta)|^p \,\mathrm{d}\eta \, \mathrm{d}s,
\end{align*} where we used the divergence theorem.
Applying Cauchy-Schwarz inequality, we obtain
\begin{align*}
\mathcal{E}_{\psi}[u](t)&\leq \mathcal{E}_{\psi}[u](0)+\int_0^t\left(\int_{\mathbf{H}_n}\mathrm{e}^{2\psi(s,\eta)}|v(s,\eta)|^{2p} \mathrm{d}\eta\right)^{\frac{1}{2}} \left(\int_{\mathbf{H}_n}\mathrm{e}^{2\psi(s,\eta)}|u_t(s,\eta)|^2 \mathrm{d}\eta\right)^{\frac{1}{2}} \mathrm{d}s\\
&\leq \mathcal{E}_{\psi}[u](0)+\sqrt{2}\int_0^t\left(\int_{\mathbf{H}_n}\mathrm{e}^{2\psi(s,\eta)}|v(s,\eta)|^{2p} \mathrm{d}\eta\right)^{\frac{1}{2}}\mathcal{E}_{\psi}[u](s)^{\frac{1}{2}} \mathrm{d}s.
\end{align*}
Thanks to Bihari's inequality, with $g(u)=(2u)^{\frac{1}{2}}$, we find
\begin{align}\label{Bihari est}
\mathcal{E}_{\psi}[u](t)^{\frac{1}{2}}\leq \mathcal{E}_{\psi}[u](0)^{\frac{1}{2}}+\frac{1}{\sqrt{2}}\int_0^t\left(\int_{\mathbf{H}_n}\mathrm{e}^{2\psi(s,\eta)}|v(s,\eta)|^{2p} \mathrm{d}\eta\right)^{\frac{1}{2}} \mathrm{d}s.
\end{align} 
The condition $v\in B_{T,K}^\psi$ implies $v(t,\cdot)\in H^1_{\psi(t,\cdot)}(\mathbf{H}_n)$ for any $t\in [0,T]$. Also, from Lemma \ref{2 lemma GN with weight} we get 
\begin{align*}
\int_{\mathbf{H}_n}\mathrm{e}^{2\psi(s,\eta)}|v(s,\eta)|^{2p} \mathrm{d} \eta &=\|\mathrm{e}^{\frac{1}{p}\psi(s,\cdot)}v(s,\cdot)\|^{2p}_{L^{2p}(\mathbf{H}_n)}\\ &\lesssim (1+s)^{p(1-\theta(2p))}\|\nabla_{\hor} v(s,\cdot)\|_{L^2(\mathbf{H}_n)}^{2(p-1)}\|\mathrm{e}^{\psi(s,\cdot)}\nabla_{\hor} v(s,\cdot)\|_{L^2(\mathbf{H}_n)}^{2}\\&\lesssim (1+s)^{p(1-\theta(2p))}\|\mathrm{e}^{\psi(s,\cdot)}\nabla_{\hor} v(s,\cdot)\|_{L^2(\mathbf{H}_n)}^{2p}\\ & \leq (1+s)^{p(1-\theta(2p))}K^{2p}.
\end{align*} 
Consequently, from \eqref{Bihari est} we have
\begin{align*}
\mathcal{E}_{\psi}[u](t)^{\frac{1}{2}}\leq \mathcal{E}_{\psi}[u](0)^{\frac{1}{2}}+C_p T(1+T)^{p(1-\theta(2p))/2}K^p,
\end{align*} where $C_p>0$ is a multiplicative  constant independent of $T$ and $K$ that may change from line to line  up to the end of the proof.
Therefore, we get
\begin{align}
\|\mathrm{e}^{\psi(t,\cdot)}u_t(t,\cdot)\|_{L^2(\mathbf{H}_n)}+\|\mathrm{e}^{\psi(t,\cdot)}\nabla_{\hor} u(t,\cdot)\|_{L^2(\mathbf{H}_n)}\leq C_p \mathcal{E}_{\psi}[u](0)^{\frac{1}{2}}+C_p T(1+T)^{p(1-\theta(2p))/2}K^p. \label{est A}
\end{align} 
On the other hand, since
\begin{align*}
\mathrm{e}^{\psi(t,\eta)} u(t,\eta) =\mathrm{e}^{\psi(t,\eta)} u_0(\eta) + \int_0^t  \mathrm{e}^{\psi(t,\eta)} u_t(s,\eta) \, \mathrm{d}s
\end{align*} and $\psi$ is decreasing with respect to $t$, we have
\begin{align*}
\|\mathrm{e}^{\psi(t,\cdot)}u(t,\cdot)\|_{L^2(\mathbf{H}_n)} & \leq \|\mathrm{e}^{\psi(t,\cdot)}u_0\|_{L^2(\mathbf{H}_n)}  +\int_0^t  \| \mathrm{e}^{\psi(t,\cdot)} u_t(s,\cdot)\|_{L^2(\mathbf{H}_n)}  \, \mathrm{d}s \\
& \leq \|\mathrm{e}^{\psi(t,\cdot)}u_0\|_{L^2(\mathbf{H}_n)}  +\int_0^t  \| \mathrm{e}^{\psi(s,\cdot)} u_t(s,\cdot)\|_{L^2(\mathbf{H}_n)}  \, \mathrm{d}s 
 \\
& \leq \|\mathrm{e}^{\psi(t,\cdot)}u_0\|_{L^2(\mathbf{H}_n)}  +C_p\mathcal{E}_{\psi}[u](0)^{\frac{1}{2}} T+C_pT^2(1+T)^{p(1-\theta(2p))/2}K^p,
\end{align*} where in the last step we used \eqref{est A}.
So, we have just proved that 
\begin{align*}
 \|\mathrm{e}^{\psi(t,\cdot)}u(t,\cdot)\|_{L^2(\mathbf{H}_n)} & + \|\mathrm{e}^{\psi(t,\cdot)}u_t(t,\cdot)\|_{L^2(\mathbf{H}_n)}+\|\mathrm{e}^{\psi(t,\cdot)}\nabla_{\hor} u(t,\cdot)\|_{L^2(\mathbf{H}_n)} \\ & \quad \leq \|\mathrm{e}^{\psi(t,\cdot)}u_0\|_{L^2(\mathbf{H}_n)}+ C_p(1+T) \mathcal{E}_{\psi}[u](0)^{\frac{1}{2}} +C_pT^2(1+T)^{p(1-\theta(2p))}K^p
\\ & \quad \leq \|\mathrm{e}^{\psi(t,\cdot)}u_0\|_{L^2(\mathbf{H}_n)}+ C_p(1+T) \left(  \|\mathrm{e}^{\psi(t,\cdot)}\nabla_{\hor} u_0\|_{L^2(\mathbf{H}_n)} +  \|\mathrm{e}^{\psi(t,\cdot)}u_1\|_{L^2(\mathbf{H}_n)}\right) \\ & \quad \qquad+C_pT^2(1+T)^{p(1-\theta(2p))/2}K^p.
\end{align*}
Clearly, we may take $K$ sufficiently large such that $$\tfrac{K}{2}>\|\mathrm{e}^{\psi(t,\cdot)}u_0\|_{L^2(\mathbf{H}_n)}+C_p \left(  \|\mathrm{e}^{\psi(t,\cdot)}\nabla_{\hor} u_0\|_{L^2(\mathbf{H}_n)}+  \|\mathrm{e}^{\psi(t,\cdot)}u_1\|_{L^2(\mathbf{H}_n)}\right).$$ Hence, fixing now $T>0$  small enough so that $$\tfrac{K}{2}T+C_pT^2(1+T)^{p(1-\theta(2p))/2}K^p< \tfrac{K}{2},$$ since the above estimates are uniform in $t$, it follows that $\|v\|_{T}^\psi\leq K$, that is, $\Phi$ maps $B_{T,K}^\psi$ to itself. 

Finally, we have to  prove that $\Phi$ is a contraction map, provided that $T$ is sufficiently small. Let us take $v,\bar{v}\in B_{T,K}^\psi$. If we denote $u\doteq\Phi(u), \bar{u}\doteq\Phi(\bar{v})$, then, $w=u-\bar{u}$ solves the Cauchy problem
\begin{equation*}\begin{cases}
 w_{tt} -\Delta_{\hor} w +  w_t = |v|^p-|\bar{v}|^p, & (t,\eta)\in (0,T)\times\mathbf{H}_n, \\  u(0,\eta)=u_t(0,\eta)=0, &  \eta\in \mathbf{H}_n.
\end{cases}
\end{equation*} 
Using again \eqref{fundamental equality 1} and the divergence theorem, after integrating over $[0,t]\times \mathbf{H}_n$,  we get  the inequality
\begin{align*}
\mathcal{E}_{\psi}[w](t)\leq \int_0^t\int_{\mathbf{H}_n}\mathrm{e}^{2\psi(s,\eta)}\Big(|v(s,\eta)|^p-|\bar{v}(s,\eta)|^p\Big)w_t(s,\eta)\, \mathrm{d}\eta \, \mathrm{d}s.
\end{align*} By $||v|^p-|\bar{v}|^p|\leq p|v-\bar{v}|(|v|+|\bar{v}|)^{p-1}$ and Cauchy-Schwarz inequality, we arrive at
\begin{align*}
& \mathcal{E}_{\psi}[w](t)\lesssim  \int_0^t\int_{\mathbf{H}_n}\!\mathrm{e}^{2\psi(s,\eta)}|v(s,\eta)-\bar{v}(s,\eta)|\Big(|v(s,\eta)|+|\bar{v}(s,\eta)|\Big)^{p-1}w_t(s,\eta)\, \mathrm{d}\eta \, \mathrm{d}s \\
& \qquad \leq \int_0^t \left(\int_{\mathbf{H}_n}\!\mathrm{e}^{2\psi(s,\eta)}|w_t(s,\eta)|^2 \mathrm{d}\eta\right)^{\frac{1}{2}}\left(\int_{\mathbf{H}_n}\mathrm{e}^{2\psi(s,\eta)}|v(s,\eta)-\bar{v}(s,\eta)|^2\Big(|v(s,\eta)|+|\bar{v}(s,\eta)|\Big)^{2(p-1)} \mathrm{d}\eta\right)^{\frac{1}{2}} \mathrm{d}s
\\
& \qquad \leq \int_0^t\mathcal{E}_{\psi}[w](s)^{\frac{1}{2}}\left(\int_{\mathbf{H}_n}\!\mathrm{e}^{2\psi(s,\eta)}|v(s,\eta)-\bar{v}(s,\eta)|^2\Big(|v(s,\eta)|+|\bar{v}(s,\eta)|\Big)^{2(p-1)} \mathrm{d}\eta\right)^{\frac{1}{2}} \mathrm{d}s.
\end{align*}
Applying again Lemma \ref{lemma of gronwall type}, we find the inequality
\begin{align}\label{stima intermedia lemma loc esistenza}
\mathcal{E}_{\psi}[w](t)^\frac{1}{2}\lesssim   \int_0^t\left(\int_{\mathbf{H}_n}\mathrm{e}^{2\psi(s,\eta)}|v(s,\eta)-\bar{v}(s,\eta)|^2(|v(s,\eta)|+|\bar{v}(s,\eta)|)^{2(p-1)} \mathrm{d}\eta\right)^{\frac{1}{2}} \mathrm{d}s.
\end{align}
By H\"{o}lder's inequality it follows
\begin{align*}
&\|\mathrm{e}^{\psi(s,\cdot)}|v(s,\cdot)-\bar{v}(s,\cdot)|(|v(s,\cdot)|+|\bar{v}(s,\cdot)|)^{p-1}\|_{L^2(\mathbf{H}_n)}\\&\quad \qquad \leq \|\mathrm{e}^{(2-p)\psi(s,\cdot)}|v(s,\cdot)-\bar{v}(s,\cdot)|\|_{L^{2p}(\mathbf{H}_n)}\|\mathrm{e}^{(p-1)\psi(s,\cdot)}(|v(s,\cdot)|+|\bar{v}(s,\cdot)|)^{p-1}\|_{L^{\frac{2p}{p-1}}(\mathbf{H}_n)}.
\end{align*}
We estimate separately the two norms on the right-hand side of the last inequality. Using Lemma \ref{2 lemma GN with weight} and the property $\psi\geq 0$, we get
\begin{align*}
\|\mathrm{e}^{(2-p)\psi(s,\cdot)}|v(s,\cdot)-\bar{v}(s,\cdot)|\|_{L^{2p}(\mathbf{H}_n)}\lesssim (1+s)^{(1-\theta(2p))/2}\|\mathrm{e}^{\psi(s,\cdot)}\nabla_{\hor}(v(s,\cdot)-\bar{v}(s,\cdot))\|_{L^{2}(\mathbf{H}_n)}
\end{align*}
and
\begin{align*}
\|\mathrm{e}^{(p-1)\psi(s,\cdot)} &(|v(s,\cdot)|+|\bar{v}(s,\cdot)|)^{p-1}\|_{L^{\frac{2p}{p-1}}(\mathbf{H}_n)}=\left(\int_{\mathbf{H}_n}\mathrm{e}^{2p\psi(s,\eta)}(|v(s,\eta)|+|\bar{v}(s,\eta)|)^{2p} \mathrm{d}\eta\right)^{\frac{p-1}{2p}}\\&\quad\lesssim \left(\|\mathrm{e}^{\psi(s,\cdot)}v(s,\cdot)\|_{L^{2p}(\mathbf{H}_n)}+\|\mathrm{e}^{\psi(s,\cdot)}\bar{v}(s,\cdot)\|_{L^{2p}(\mathbf{H}_n)}\right)^{p-1}\\ &\quad\lesssim (1+s)^{(1-\theta(2p))(p-1)/2}\left(\|\mathrm{e}^{\psi(s,\cdot)}\nabla_{\hor} v(s,\cdot)\|_{L^{2}(\mathbf{H}_n)}+\|\mathrm{e}^{\psi(s,\cdot)}\nabla_{\hor} \bar{v}(s,\cdot)\|_{L^{2}(\mathbf{H}_n)}\right)^{p-1}.
\end{align*}
By \eqref{stima intermedia lemma loc esistenza} we have
\begin{align}
\|\mathrm{e}^{\psi(t,\cdot)}  w_t(t,\cdot)\|_{L^2(\mathbf{H}_n)} &+\|\mathrm{e}^{\psi(t,\cdot)}\nabla_{\hor} w(t,\cdot)\|_{L^2(\mathbf{H}_n)} \notag\\ & \leq C_p  \int_0^t (1+s)^{p(1-\theta(2p))/2}\|\mathrm{e}^{\psi(s,\cdot)}\nabla_{\hor}(v(s,\cdot)-\bar{v}(s,\cdot))\|_{L^{2}(\mathbf{H}_n)}\notag \\& \qquad \qquad\times\left(\|\mathrm{e}^{\psi(s,\cdot)}\nabla_{\hor} v(s,\cdot)\|_{L^{2}(\mathbf{H}_n)}+\|\mathrm{e}^{\psi(s,\cdot)}\nabla_{\hor} \bar{v}(s,\cdot)\|_{L^{2}(\mathbf{H}_n)}\right)^{p-1} \mathrm{d}s \notag\\ &\leq C_p\int_0^t (1+s)^{p(1-\theta(2p))/2}\mathrm{d}s \, \|v-\bar{v}\|_T^\psi\left(\|v\|_T^\psi+\|\bar{v}\|_T^\psi\right)^{p-1}\notag \\& \leq C_p T(1+T)^{p(1-\theta(2p))/2}K^{p-1} \|v-\bar{v}\|_T^\psi. \label{est B}
\end{align} 
Furthermore, 
\begin{align*}
\mathrm{e}^{\psi(t,\eta)} w(t,\eta) = \int_0^t  \mathrm{e}^{\psi(t,\eta)} w_t(s,\eta) \, \mathrm{d}s
\end{align*} and the fact that $	\psi $ is decreasing with respect to $t$ imply
\begin{align}
\|\mathrm{e}^{\psi(t,\cdot)}w(t,\cdot)\|_{L^2(\mathbf{H}_n)} & \leq \int_0^t  \| \mathrm{e}^{\psi(t,\eta)} w_t(s,\eta)\|_{L^2(\mathbf{H}_n)}  \, \mathrm{d}s  \leq \int_0^t  \| \mathrm{e}^{\psi(s,\eta)} w_t(s,\eta)\|_{L^2(\mathbf{H}_n)}  \, \mathrm{d}s \notag\\
& \leq C_p T^2(1+T)^{p(1-\theta(2p))/2}K^{p-1} \|v-\bar{v}\|_T^\psi,\label{est C}
\end{align} where in the last inequality we applied \eqref{est B}.
 Summarizing, combining \eqref{est B} and \eqref{est C} we arrive at
\begin{align*}
\|\Phi(v)-\Phi(\bar{v})\|_T^\psi  = \|w\|_T^\psi \leq C_p T(1+T)^{1+p(1-\theta(2p))/2}K^{p-1} \|v-\bar{v}\|_T^\psi.
\end{align*} So, choosing $T>0$ sufficiently small we find that $\Phi$ is a contraction.

Therefore, our starting problem has a unique solution $u$ in $\mathcal{C}([0,T_\mathrm{max}),H^1(\mathbf{H}_n))\cap\mathcal{C}^1([0,T_\mathrm{max}),L^2(\mathbf{H}_n))$ with finite energy $\mathcal{E}_{\psi}[u](t)$ for any $t\in [0,T_\mathrm{max})$, due to Banach's fixed point theorem.
Moreover $T_\mathrm{max}<\infty$ implies the blow up of the energy for $T\to T_\mathrm{max}^-$. Otherwise, if it was not so, we would have a finite energy for $u$ in a left neighborhood of $T_\mathrm{max}$, and then repeating the same arguments when the initial conditions are taken for $t=0$, we could extend the solution, violating the maximality of $T_\mathrm{max}$.
\end{proof}

\section{Estimates for the linear problem} \label{Section Linear estimates}

In order to prove Theorem \ref{thm glob exi exp data}, we recall some decay estimates for the solution of the linear Cauchy problem \eqref{linear CP}.
In the next propositions we can relax the assumptions for the initial data, considering a less restrictive space than the weighted energy space $\mathcal{A}(\mathbf{H}_n)$. More precisely, we may assume just data in the classical energy spaces with additional $L^1(\mathbf{H}_n)$ regularity, namely,
\begin{align*}
(u_0,u_1)\in (H^1(\mathbf{H}_n)\cap L^1(\mathbf{H}_n))\times (L^2(\mathbf{H}_n)\cap L^1(\mathbf{H}_n)).
\end{align*} 
We set
\begin{align*}
\mathcal{D}^{\kappa}(\mathbf{H}_n) \doteq (H^\kappa(\mathbf{H}_n)\cap L^1(\mathbf{H}_n))\times (L^2(\mathbf{H}_n)\cap L^1(\mathbf{H}_n)) \qquad \mbox{for } \, \kappa\in \{0,1\}. % \ \mbox{and} \ m\in\{1,2\}.
\end{align*} 
Clearly, 
\begin{align} \label{embedding spaces data}
\mathcal{A}(\mathbf{H}_n) \hookrightarrow \mathcal{D}^{1}_1(\mathbf{H}_n)\hookrightarrow H^1(\mathbf{H}_n)\times L^2(\mathbf{H}_n).
\end{align}

\begin{proposition}\label{Prop Lin Estim} Let us assume $(u_0,u_1)\in  \mathcal{D}^{1}(\mathbf{H}_n)$. Let $u\in\mathcal{C}([0,\infty), H^1(\mathbf{H}_n))\cap \mathcal{C}^1([0,\infty), L^2(\mathbf{H}_n))$ solve the Cauchy problem \eqref{linear CP}. Then, the following decay estimates are satisfied
\begin{align} \label{estimate u}
\| u(t,\cdot)\|_{L^2(\mathbf{H_n})} & \leq C (1+t)^{-\frac{\mathcal{Q}}{4}} \| (u_0,u_1)\|_{\mathcal{D}^0(\mathbf{H}_n)}  \\ 
\label{estimate nabla hor u} \| \nabla_{\hor} u(t,\cdot)\|_{L^2(\mathbf{H_n})} & \leq C (1+t)^{-\frac{\mathcal{Q}}{4}-\frac{1}{2}} \| (u_0,u_1)\|_{\mathcal{D}^1(\mathbf{H}_n)}\\ 
\label{estimate partial t u} \| \partial_t u(t,\cdot)\|_{L^2(\mathbf{H_n})} & \leq C (1+t)^{-\frac{\mathcal{Q}}{4}-1} \| (u_0,u_1)\|_{\mathcal{D}^1(\mathbf{H}_n)}
\end{align}
for any $t\geq 0$. Furthermore, if  we assume just $(u_0,u_1)\in H^1(\mathbf{H}_n)\times L^2(\mathbf{H}_n)$, that is, we do not require additional $L^1(\mathbf{H}_n)$ regularity for the Cauchy data, then 
the following estimates are satisfied
\begin{align} \label{estimate u only L2}
\| u(t,\cdot)\|_{L^2(\mathbf{H_n})} & \leq C\| (u_0,u_1)\|_{ L^2(\mathbf{H}_n)}  \\ 
\label{estimate nabla hor u only L2} \| \nabla_{\hor} u(t,\cdot)\|_{L^2(\mathbf{H_n})} & \leq C (1+t)^{-\frac{1}{2}} \| (u_0,u_1)\|_{H^1(\mathbf{H}_n)\times L^2(\mathbf{H}_n)}\\ 
\label{estimate partial t u only L2} \| \partial_t u(t,\cdot)\|_{L^2(\mathbf{H_n})} & \leq C (1+t)^{-1} \| (u_0,u_1)\|_{H^1(\mathbf{H}_n)\times L^2(\mathbf{H}_n)}
\end{align}
for any $t\geq 0$. Here $C>0$ is a universal constant.
\end{proposition}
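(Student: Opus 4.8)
The plan is to diagonalize the sub-Laplacian by means of the group Fourier transform on $\mathbf{H}_n$ and to reduce \eqref{linear CP} to a family of scalar damped ODEs, thereby transporting Matsumura's phase-space analysis to the Heisenberg setting; this is essentially the content of \cite{Pal19}, which we invoke. For $\lambda\in\mathbb{R}\setminus\{0\}$ one uses the Schr\"odinger representation $\pi_\lambda$ and the associated Hermite basis $\{e_\alpha\}_{\alpha\in\mathbb{N}_0^n}$ of $L^2(\mathbb{R}^n)$, with respect to which $-\Delta_{\hor}$ acts diagonally with eigenvalues $\mu=\mu_{\alpha,\lambda}=|\lambda|(2|\alpha|+n)$. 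Since $\widehat{\Delta_{\hor}f}(\lambda)=\hat f(\lambda)\,d\pi_\lambda(\Delta_{\hor})$, each diagonal entry of the operator-valued transform $\hat u(t,\lambda)$ satisfies the scalar equation $y''+y'+\mu\,y=0$, whose characteristic roots are $r_\pm=\tfrac12\big(-1\pm\sqrt{1-4\mu}\big)$. Writing the solution as $u(t)=K_0(t)u_0+K_1(t)u_1$, the propagators $K_0,K_1$ are thus functions of $-\Delta_{\hor}$, acting in the Hermite basis as multiplication by the explicit ODE fundamental solutions evaluated at $\mu$.

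The second step is to split the spectrum according to the size of $\mu$. In the \emph{low-frequency} regime $\mu\le 1/4$ the roots are real, and since $-2\mu\le r_+=\frac{-2\mu}{1+\sqrt{1-4\mu}}\le -\mu$ while $r_-\le -1/2$, both propagators are dominated by the heat-type factor $\mathrm{e}^{-\mu t}$ (with $r_+-r_-=\sqrt{1-4\mu}\approx 1$ controlling the $K_1$-propagator). In the \emph{high-frequency} regime $\mu>1/4$ the roots are complex with $\re r_\pm=-1/2$, so both propagators decay like $\mathrm{e}^{-t/2}$, with an additional factor $\mu^{-1/2}$ in $K_1$ coming from $r_+-r_-=\mathrm{i}\sqrt{4\mu-1}$. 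The two contributions are recombined through Plancherel's formula $\|v\|_{L^2(\mathbf{H}_n)}^2=c_n\int_{\mathbb{R}\setminus\{0\}}\|\hat v(\lambda)\|_{\HS}^2\,|\lambda|^n\,\mathrm{d}\lambda$.

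To obtain the rates under the additional $L^1$ hypothesis, the low-frequency part is handled by the operator-norm bound $\|\hat f(\lambda)\|\le\|f\|_{L^1(\mathbf{H}_n)}$. Writing the contribution of $u_j$ as $\hat u_j(\lambda)\,M_j(t,\lambda)$ with $M_j$ diagonal and entries $m_{j,\alpha}(t,\lambda)$ bounded by the propagators, one has $\|\hat u_j(\lambda)M_j(t,\lambda)\|_{\HS}^2=\sum_\alpha|m_{j,\alpha}(t,\lambda)|^2\,\|\hat u_j(\lambda)e_\alpha\|^2\le\|u_j\|_{L^1}^2\sum_\alpha|m_{j,\alpha}(t,\lambda)|^2$, and on the low-frequency set $|m_{j,\alpha}|^2\lesssim\mathrm{e}^{-2|\lambda|(2|\alpha|+n)t}$. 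Plancherel's formula then reduces everything to the scaling identity
\[
\int_{\mathbb{R}\setminus\{0\}}\sum_{\alpha\in\mathbb{N}_0^n}\mathrm{e}^{-2|\lambda|(2|\alpha|+n)t}\,|\lambda|^n\,\mathrm{d}\lambda\approx(1+t)^{-\mathcal{Q}/2},
\]
which is exactly the on-diagonal decay of the heat kernel of $\mathrm{e}^{t\Delta_{\hor}}$ and, since $\mathcal{Q}/2=n+1$, yields after taking square roots the rate $(1+t)^{-\mathcal{Q}/4}$ in \eqref{estimate u}. The high-frequency part is instead bounded by the plain Plancherel norm times $\mathrm{e}^{-t/2}$, which is harmless. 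For \eqref{estimate nabla hor u} and \eqref{estimate partial t u} one repeats the argument after observing that, in the $L^2$ sense, $\nabla_{\hor}$ and $\partial_t$ multiply the symbol on the low-frequency part by factors of size $\mu^{1/2}$ and $\mu$; combined with $\mathrm{e}^{-\mu t}$ and the elementary inequalities $\sup_{\mu\ge0}\mu^{a}\mathrm{e}^{-\mu t}\lesssim(1+t)^{-a}$, these give the extra gains $(1+t)^{-1/2}$ and $(1+t)^{-1}$. On the high-frequency part each derivative costs a factor $\mu^{1/2}$; this is precisely where the $H^1$-regularity of $u_0$ encoded in $\|\cdot\|_{\mathcal{D}^1(\mathbf{H}_n)}$ (rather than the weaker $\|\cdot\|_{\mathcal{D}^0(\mathbf{H}_n)}$ of \eqref{estimate u}) is consumed, the surviving $\mathrm{e}^{-t/2}$ absorbing the polynomial loss. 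The estimates \eqref{estimate u only L2}--\eqref{estimate partial t u only L2} follow from the same ODE bounds upon replacing the low-frequency $L^1$ step by the trivial $L^2$ Plancherel bound: one then forfeits the polynomial gain in \eqref{estimate u only L2}, while $\sup_\mu\mu^{1/2}\mathrm{e}^{-\mu t}\lesssim(1+t)^{-1/2}$ and $\sup_\mu\mu\,\mathrm{e}^{-\mu t}\lesssim(1+t)^{-1}$ still deliver \eqref{estimate nabla hor u only L2} and \eqref{estimate partial t u only L2}.

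I expect the main difficulty to lie in carrying the scalar Matsumura computation faithfully through the non-commutative transform: because $\hat f(\lambda)$ is operator-valued, the pointwise-in-frequency bound $\|\hat f(\lambda)\|\le\|f\|_{L^1}$ must be married to the Hilbert--Schmidt Plancherel measure without mislaying the Hermite multiplicities $\binom{k+n-1}{n-1}$ hidden in the sum over $\alpha$, and the low/high decomposition must be organized uniformly in the two parameters $(\alpha,\lambda)$ through the single quantity $\mu=|\lambda|(2|\alpha|+n)$. Controlling the transition zone $\mu\approx1/4$, where $r_+-r_-$ degenerates, and justifying that $\nabla_{\hor}$ and $\partial_t$ commute with the spectral decomposition, are the remaining technical points; all of these are settled in \cite{Pal19}.
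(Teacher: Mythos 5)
Your proposal is correct and follows essentially the same route as the paper, which proves this proposition simply by citing \cite{Pal19}: there the group Fourier transform on $\mathbf{H}_n$ is used exactly as you describe, diagonalizing $-\Delta_{\hor}$ in the Hermite basis with eigenvalues $\mu=|\lambda|(2|\alpha|+n)$, performing Matsumura-type low/high frequency analysis on the resulting scalar ODEs, and combining the operator-norm bound $\|\hat f(\lambda)\|\le\|f\|_{L^1}$ with the Plancherel formula to extract the heat-type decay $(1+t)^{-\mathcal{Q}/4}$. Your sketch correctly identifies all the structural ingredients (including where the $H^1$ regularity of $u_0$ is consumed at high frequencies and where the $L^1$ assumption is dropped for \eqref{estimate u only L2}--\eqref{estimate partial t u only L2}), so it is a faithful outline of the cited proof.
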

\begin{proof}
See \cite[Theorem 1.1]{Pal19}, where the group Fourier transform on $\mathbf{H}_n$ is applied to prove this result.
\end{proof}

Finally, let us point out explicitly that we can still employ the estimates derived in the previous proposition in order to estimate  Duhamel's integral term \eqref{Duhamel term}, as the operator $\partial_t^2-\Delta_{\hor}+\partial_t$ is invariant by time translations.

\section{Global existence of small data solutions: proof of Theorem \ref{thm glob exi exp data}}\label{SDGE Section}

In order to prove Theorem \ref{thm glob exi exp data}, first we have to prove the next preliminary lemma, which allows us to estimate the weighted energy \eqref{def weighted energy} of a local (in time) solution $u$ to \eqref{CP semilinear 1}.
%\begin{align*}
%E_{\psi,u}(t)&=\frac{1}{2}\int_{\mathbb{R}^n}\mathrm{e}^{2\psi(t,x)}\left(|u_t(t,x)|^2+|\nabla u(t,x)|^2+m^2(t)|u(t,x)|^2\right)dx.
%\end{align*}

\begin{lemma}\label{lemma before glob exist with exp weight} Let $n\geq 1$ and $p>1$ such that $p\leq \frac{\mathcal{Q}}{\mathcal{Q}-2}$. Let $(u_0,u_1)\in \mathcal{A}(\mathbf{H}_n)$. If $u$ solves
\begin{align*}
\begin{cases}u_{tt} -\Delta_{\hor} u + u_t =|u|^p, & (t,\eta)\in (0,T)\times\mathbf{H}_n, \\  u(0,\eta)=u_0(\eta), &  \eta\in \mathbf{H}_n,\\  u_t(0,\eta)=u_1(\eta), &  \eta\in \mathbf{H}_n,
\end{cases}
\end{align*} then, the following energy estimate holds for any $t\in [0,T)$ and for an arbitrary small $\delta>0$
\begin{equation}\label{lemma before glob exist with exp weight est fund}
\mathcal{E}_{\psi}[u](t)\lesssim I^2_{0}+I^{p+1}_{0}+\bigg(\sup_{s\in[0,t]}(1+s)^\delta\|\mathrm{e}^{\left(\frac{2}{p+1}+\delta\right)\psi(s,\cdot)}u(s,\cdot)\|_{L^{p+1}(\mathbf{H}_n)}\bigg)^{p+1},
\end{equation} where 
\begin{align*}
I^2_{0}& \doteq \int_{\mathbf{H}_n}\mathrm{e}^{2\psi(0,\eta)}\Big(|u_1(\eta)|^2+|\nabla_{\hor} u_0(\eta)|^2\Big)\, \mathrm{d}\eta.
\end{align*}
\end{lemma}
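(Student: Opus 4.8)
The plan is to estimate the weighted energy $\mathcal{E}_{\psi}[u](t)$ by integrating the fundamental inequality \eqref{fundamental equality 2} over $[0,t]\times\mathbf{H}_n$, exploiting the divergence theorem to kill the $\diver(\mathrm{e}^{2\psi}u_t\nabla_{\hor} u)$ term. Since $u$ solves the semilinear equation with $F=|u|^p$, integrating \eqref{fundamental equality 2} yields
\begin{align*}
\mathcal{E}_{\psi}[u](t)-\int_{\mathbf{H}_n}\mathrm{e}^{2\psi(t,\eta)}\frac{|u|^pu}{p+1}\,\mathrm{d}\eta \leq \mathcal{E}_{\psi}[u](0)-\int_{\mathbf{H}_n}\mathrm{e}^{2\psi(0,\eta)}\frac{|u_0|^pu_0}{p+1}\,\mathrm{d}\eta -2\int_0^t\!\!\int_{\mathbf{H}_n}\psi_s\,\mathrm{e}^{2\psi}\frac{|u|^pu}{p+1}\,\mathrm{d}\eta\,\mathrm{d}s.
\end{align*}
The term $\mathcal{E}_{\psi}[u](0)$ is bounded by $I_0^2$ directly from its definition. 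The pointwise term $\int_{\mathbf{H}_n}\mathrm{e}^{2\psi(0,\eta)}|u_0|^{p+1}/(p+1)\,\mathrm{d}\eta$ at $t=0$ must be controlled by $I_0^{p+1}$, which I would obtain by applying the weighted Gagliardo–Nirenberg inequality from Lemma \ref{2 lemma GN with weight} to $u_0$ (with the exponent $q=p+1$, legitimate since $p\leq p_{\GN}(\mathcal{Q})$ guarantees $p+1\leq 2\mathcal{Q}/(\mathcal{Q}-2)$) and then absorbing $\|\mathrm{e}^{\psi(0,\cdot)}\nabla_{\hor}u_0\|_{L^2}$ into $I_0$.

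The main work lies in the last integral. Here I would use the crucial sign property $\psi_s\leq 0$; more precisely $-\psi_s = (|x|^2+|y|^2+4|\tau|)/(8(1+s)^2) = \psi(s,\eta)/(1+s)$, so $-\psi_s\,\mathrm{e}^{2\psi}\approx (1+s)^{-1}\psi\,\mathrm{e}^{2\psi}$. The key observation is that $\psi\,\mathrm{e}^{2\psi}\lesssim \mathrm{e}^{(2+\epsilon)\psi}$ for any $\epsilon>0$ (since $z\,\mathrm{e}^{2z}\lesssim_\epsilon \mathrm{e}^{(2+\epsilon)z}$ for $z\geq 0$), which lets me trade the polynomial factor $\psi$ for a slightly larger exponential weight. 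This is exactly what produces the $\delta$-shifted weight $\mathrm{e}^{(\frac{2}{p+1}+\delta)\psi}$ in the statement: writing $\mathrm{e}^{(2+\epsilon)\psi}=\big(\mathrm{e}^{(\frac{2}{p+1}+\delta)\psi}\big)^{p+1}$ forces the relation $\epsilon=(p+1)\delta$, so the weight exponent is distributed across the $L^{p+1}$ norm of $u$. Thus
\begin{align*}
-2\int_0^t\!\!\int_{\mathbf{H}_n}\psi_s\,\mathrm{e}^{2\psi}\frac{|u|^pu}{p+1}\,\mathrm{d}\eta\,\mathrm{d}s \lesssim \int_0^t (1+s)^{-1}\|\mathrm{e}^{(\frac{2}{p+1}+\delta)\psi(s,\cdot)}u(s,\cdot)\|_{L^{p+1}(\mathbf{H}_n)}^{p+1}\,\mathrm{d}s.
\end{align*}

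To close the estimate I would factor out the supremum: inserting $(1+s)^{\delta(p+1)}(1+s)^{-\delta(p+1)}$ and pulling the time-dependent weight into the $\sup$ defining the quantity on the right-hand side of \eqref{lemma before glob exist with exp weight est fund}, the remaining time integral is $\int_0^t (1+s)^{-1-\delta(p+1)}\,\mathrm{d}s$, which converges uniformly in $t$ because $\delta>0$. This yields precisely the $\big(\sup_{s\in[0,t]}(1+s)^\delta\|\mathrm{e}^{(\frac{2}{p+1}+\delta)\psi}u\|_{L^{p+1}}\big)^{p+1}$ contribution. Finally I would move the term $\int_{\mathbf{H}_n}\mathrm{e}^{2\psi(t,\eta)}|u|^{p+1}/(p+1)\,\mathrm{d}\eta$ that sits on the left to the right-hand side and bound it again via Lemma \ref{2 lemma GN with weight}, controlling it by the same supremum quantity (with $\delta=0$, which is dominated by the $\delta>0$ version at the cost of the convergent time integral already accounted for). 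The main obstacle I anticipate is the careful bookkeeping of weight exponents so that the elementary inequality $\psi\,\mathrm{e}^{2\psi}\lesssim_\delta \mathrm{e}^{(2+(p+1)\delta)\psi}$ lines up exactly with the $L^{p+1}$ structure, and verifying that the admissibility range $p\leq p_{\GN}(\mathcal{Q})$ is respected at every application of the weighted Gagliardo–Nirenberg inequality.
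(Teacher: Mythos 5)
Your proposal is correct and follows essentially the same strategy as the paper's proof: integrate \eqref{fundamental equality 2} over $[0,t]\times\mathbf{H}_n$, bound the initial terms by $I_0^2+I_0^{p+1}$, use $-\psi_s=(1+s)^{-1}\psi$ together with $z\,\mathrm{e}^{-cz}\lesssim_c 1$ to trade $|\psi_s|\,\mathrm{e}^{2\psi}$ for $(1+s)^{-1}\mathrm{e}^{(2+\delta(p+1))\psi}$, and factor out the supremum against the convergent integral $\int_0^t(1+s)^{-1-\delta(p+1)}\,\mathrm{d}s$. The one genuine (and harmless) deviation is the treatment of the initial nonlinear term: you invoke the weighted Gagliardo--Nirenberg inequality of Lemma \ref{2 lemma GN with weight} with $q=p+1$ and $\sigma=\tfrac{2}{p+1}$, which is admissible since $p>1$ gives $\sigma\in(0,1)$ and $p\leq p_{\GN}(\mathcal{Q})$ gives $p+1<\tfrac{2\mathcal{Q}}{\mathcal{Q}-2}$, and then you absorb both gradient factors into $I_0$; the paper instead applies the unweighted Sobolev embedding $H^1(\mathbf{H}_n)\hookrightarrow L^{p+1}(\mathbf{H}_n)$ to the function $\mathrm{e}^{\frac{1}{p+1}\psi(0,\cdot)}u_0$ and must then absorb the resulting term $|\nabla_{\hor}\psi(0,\cdot)|^2|u_0|^2$ by hand via $\big(1+|x|^2+|y|^2\big)\mathrm{e}^{\frac{2}{p+1}\psi(0,\eta)}\lesssim\mathrm{e}^{2\psi(0,\eta)}$. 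Your route is slightly cleaner because it reuses an already-established lemma rather than redoing the weight-gradient bookkeeping. One cosmetic remark: for the term $\|\mathrm{e}^{\frac{2}{p+1}\psi(t,\cdot)}u(t,\cdot)\|_{L^{p+1}(\mathbf{H}_n)}^{p+1}$ that you move to the right-hand side, no Gagliardo--Nirenberg is needed at all (and in fact using it would produce gradient norms not present in the stated bound); your parenthetical domination argument --- $\mathrm{e}^{\frac{2}{p+1}\psi}\leq\mathrm{e}^{(\frac{2}{p+1}+\delta)\psi}$ and $1\leq(1+t)^{\delta(p+1)}$ --- is exactly the paper's step \eqref{last estimate local existence} and is all that is required.
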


\begin{proof}
First we  prove that 
\begin{align}\label{lemma before glob exist with exp weight est 1}
\mathcal{E}_{\psi}[u](t)\lesssim I^2_{0}+I^{p+1}_{0}+\|\mathrm{e}^{\frac{2}{p+1}\psi(t,\cdot)}u(t,\cdot)\|_{L^{p+1}(\mathbf{H}_n)}^{p+1}+\int_0^t \int_{\mathbf{H}_n}|\psi_t(s,\eta)|\mathrm{e}^{2\psi(s,\eta)}|u(s,\eta)|^{p+1}\mathrm{d}\eta\, \mathrm{d}s.
\end{align}
Integrating the relation \eqref{fundamental equality 2} over $[0,t]\times\mathbf{H}_n$, we get immediately (after using the divergence theorem)
\begin{align*}
\mathcal{G}_{\psi}[u](t)\leq \mathcal{G}_{\psi}[u](0)-\tfrac{2}{p+1}\int_0^t\int_{\mathbf{H}_n}\psi_t(s,\eta)\mathrm{e}^{2\psi(s,\eta)}|u(s,\eta)|^{p}u(s,\eta)\, \mathrm{d}\eta \, \mathrm{d}s,
\end{align*} where
\begin{align*}
\mathcal{G}_{\psi}[u](t)\doteq \mathcal{E}_{\psi}[u](t)-\tfrac{1}{p+1}\int_{\mathbf{H}_n}\mathrm{e}^{2\psi(t,\eta)}|u(t,\eta)|^{p}u(t,\eta)\, \mathrm{d}\eta.
\end{align*}
 Consequently,
\begin{align*}
\mathcal{E}_{\psi}[u](t)&\leq \mathcal{G}_{\psi}[u](0)+\tfrac{1}{p+1}\int_{\mathbf{H}_n}\mathrm{e}^{2\psi(t,\eta)}|u(t,\eta)|^{p}u(t,\eta)\,\mathrm{d}\eta -\tfrac{2}{p+1}\int_0^t\int_{\mathbf{H}_n}\psi_t(s,\eta)\mathrm{e}^{2\psi(s,\eta)}|u(s,\eta)|^{p}u(s,\eta)\,\mathrm{d}\eta \,\mathrm{d}s\\
&\lesssim \mathcal{G}_{\psi}[u](0)+\|\mathrm{e}^{\frac{2}{p+1}\psi(t,\cdot)}u(t,\cdot)\|_{L^{p+1}(\mathbf{H}_n)}^{p+1}+\int_0^t\int_{\mathbf{H}_n}|\psi_t(s,\eta)|\mathrm{e}^{2\psi(s,\eta)}|u(s,\eta)|^{p+1}\,\mathrm{d}\eta \,\mathrm{d}s.
\end{align*}
 So, in order to prove \eqref{lemma before glob exist with exp weight est 1} we have just to show that $\mathcal{G}_{\psi}[u](0)\lesssim  I^2_0+I^{p+1}_0$. 
 Since 
\begin{align*}
\mathcal{G}_{\psi}[u](0)&=\mathcal{E}_{\psi}[u](0)-\tfrac{1}{p+1}\int_{\mathbf{H}_n}\mathrm{e}^{\psi(0,\eta)}|u_0(\eta)|^p u_0(\eta)\,\mathrm{d}\eta\lesssim I^2_0+\int_{\mathbf{H}_n}\mathrm{e}^{\psi(0,\eta)}|u_0(\eta)|^{p+1}\mathrm{d}\eta,
\end{align*} we have to prove only that $$\displaystyle{\int_{\mathbf{H}_n}\mathrm{e}^{\psi(0,\eta)}|u_0(\eta)|^{p+1}\mathrm{d}\eta\lesssim I^{p+1}_0}.$$
Because of $p+1<\frac{\mathcal{Q}}{\mathcal{Q}-2}+1<\frac{2\mathcal{Q}}{\mathcal{Q}-2}$, using the Sobolev embedding $$H^1(\mathbf{H}_n)\hookrightarrow L^{p+1}(\mathbf{H}_n)$$ which follows, for example,  from the special case  $\theta=1$ in Lemma \ref{Lemma dis GN Hn} by interpolation with the trivial embedding $H^1(\mathbf{H}_n)\hookrightarrow L^{2}(\mathbf{H}_n)$, we find
\begin{align*}
\int_{\mathbf{H}_n}\mathrm{e}^{\psi(0,\eta)}|u_0(\eta)|^{p+1}\mathrm{d}\eta & =\|\mathrm{e}^{\frac{1}{p+1}\psi(0,\cdot)}u_0\|_{L^{p+1}(\mathbf{H}_n)}^{p+1}\lesssim \|\mathrm{e}^{\frac{1}{p+1}\psi(0,\cdot)}u_0\|_{H^1(\mathbf{H}_n)}^{p+1}\\
&=\left(\int_{\mathbf{H}_n}\mathrm{e}^{\frac{2}{p+1}\psi(0,\eta)}\left(|u_0(\eta)|^{2}+|\nabla_{\hor} u_0(\eta)|^{2}+ (p+1)^{-2} |\nabla_{\hor}\psi(0,\eta)|^2|u_0(\eta)|^{2} \right)\mathrm{d}\eta\right)^{\frac{p+1}{2}}\\
&\lesssim \left(\int_{\mathbf{H}_n}\mathrm{e}^{\frac{2}{p+1}\psi(0,\eta)}\left(|u_0(\eta)|^{2}+|\nabla_{\hor} u_0(\eta)|^{2}+  \big(|x|^2\!+|y|^2\big) |u_0(\eta)|^{2} \right)\mathrm{d}\eta\right)^{\frac{p+1}{2}}\\
& \lesssim \left(\int_{\mathbf{H}_n}\mathrm{e}^{2\psi(0,\eta)}\left(|u_0(\eta)|^{2}+|\nabla_{\hor} u_0(\eta)|^{2}\right)\mathrm{d}\eta\right)^{\frac{p+1}{2}}\lesssim I_{0}^{p+1},
\end{align*} where $\eta=(x,y,\tau)$ and in the second last inequality we have used the fact that $p>1$ to get the estimate 
\begin{align*}
\left(1+\big(|x|^2\!+|y|^2\big)\right)\mathrm{e}^{\frac{2}{p+1}\psi(0,\eta)}\lesssim \mathrm{e}^{2\psi(0,\eta)}.
\end{align*} 
So, we proved \eqref{lemma before glob exist with exp weight est 1}.
 From the relation $\psi_t(s,\eta)=-(1+s)^{-1}\psi(s,\eta)$ it follows
\begin{align*}
|\psi_t(s,\eta)|\mathrm{e}^{(2-\gamma(p+1))\psi(s,\eta)}=\tfrac{1}{1+s}\psi(s,\eta)\mathrm{e}^{-\delta(p+1)\psi(s,\eta)}\lesssim (1+s)^{-1},
\end{align*} with $\gamma=\frac{2}{p+1}+\delta$ and $\delta>0$. Therefore,
\begin{align}
\int_0^t\int_{\mathbf{H}_n}|\psi_t(s,\eta)|\mathrm{e}^{2\psi(s,\eta)}|u(s,\eta)|^{p+1}\mathrm{d}x \, \mathrm{d}s & \lesssim \int_0^t (1+s)^{-1}\int_{\mathbf{H}_n}\mathrm{e}^{\gamma (p+1)\psi(s,\eta)}|u(s,\eta)|^{p+1}\mathrm{d}\eta \, \mathrm{d}s\notag \\
& \leq \sup_{s\in[0,t]}(1+s)^{\delta(p+1)}\|\mathrm{e}^{\gamma\psi(s,\cdot)}u(s,\cdot)\|_{L^{p+1}(\mathbf{H}_n)}^{p+1}\int_0^t (1+s)^{-1-\delta(p+1)}\mathrm{d}s\notag\\
& \lesssim \bigg(\sup_{s\in[0,t]}(1+s)^{\delta}\|\mathrm{e}^{\gamma\psi(s,\cdot)}u(s,\cdot)\|_{L^{p+1}(\mathbf{H}_n)}\bigg)^{p+1}.\label{lemma before glob exist with exp weight est 2}
\end{align} 
Finally, since $\gamma>\frac{2}{p+1}$ and $\delta>0$, we have trivially
\begin{align}\label{last estimate local existence}
\|\mathrm{e}^{\frac{2}{p+1}\psi(t,\cdot)}u(t,\cdot)\|_{L^{p+1}(\mathbf{H}_n)}^{p+1}\leq \left((1+t)^{\delta}\|\mathrm{e}^{\gamma\psi(t,\cdot)}u(t,\cdot)\|_{L^{p+1}(\mathbf{H}_n)}\right)^{p+1}.
\end{align} Hence, combining \eqref{last estimate local existence}, \eqref{lemma before glob exist with exp weight est 1} and \eqref{lemma before glob exist with exp weight est 2}, we get the desired estimate \eqref{lemma before glob exist with exp weight est fund}.
\end{proof} 

Combing the \emph{linear estimates} from Section \ref{Section Linear estimates} and Lemma \ref{lemma before glob exist with exp weight}, we can finally prove Theorem \ref{thm glob exi exp data}.

\begin{proof}[Proof of Theorem \ref{thm glob exi exp data}]
By contradiction, let us assume that for any $\varepsilon_0>0$ there exists data satisfying \eqref{global existence eponential weight data cond} such that the solution $u\in\mathcal{C}([0,T_\mathrm{max}),H^1_{\psi(t,\cdot)}(\mathbf{H}_n))\cap \mathcal{C}^1([0,T_\mathrm{max}),L^2_{\psi(t,\cdot)}(\mathbf{H}_n))$ to the corresponding problem, whose existence is guaranteed by Theorem \ref{thm local existence}, is not global in time, that means $T_\mathrm{max}<\infty$.

For any $T\in (0,T_\mathrm{max})$, we may define the Banach space 
\begin{align*}
X(T) \doteq \mathcal{C}([0,T],H^1_{\psi(t,\cdot)}(\mathbf{H}_n))\cap \mathcal{C}^1([0,T],L^2_{\psi(t,\cdot)}(\mathbf{H}_n)),
\end{align*} equipped with the norm 
\begin{align*}
\|u\|_{X(T)} \doteq \sup_{t\in[0,T]}\Big[&\| \mathrm{e}^{\psi(t,\cdot)}\nabla_{\hor} u(t,\cdot)\|_{L^2(\mathbf{H}_n)}+\|\mathrm{e}^{\psi(t,\cdot)}u_t(t,\cdot)\|_{L^2(\mathbf{H}_n)} +(1+t)^{\frac{\mathcal{Q}}{4}}\|u(t,\cdot)\|_{L^2(\mathbf{H}_n)}\\&\quad+(1+t)^{\frac{\mathcal{Q}}{4}+\frac{1}{2}}\|\nabla_{\hor} u(t,\cdot)\|_{L^2(\mathbf{H}_n)}+(1+t)^{\frac{\mathcal{Q}}{4}+1}\|u_t(t,\cdot)\|_{L^2(\mathbf{H}_n)}\Big)\Big].
\end{align*} 
By Lemma \ref{lemma before glob exist with exp weight} it follows that
\begin{align}
 \|\mathrm{e}^{\psi(t,\cdot)}u_t(t,\cdot)\|_{L^2(\mathbf{H}_n)} & +\|\mathrm{e}^{\psi(t,\cdot)}\nabla_{\hor} u(t,\cdot)\|_{L^2(\mathbf{H}_n)}\notag \\ &\lesssim   \varepsilon_0+\varepsilon_0^{\frac{p+1}{2}}+\bigg(\sup_{s\in[0,t]}(1+s)^\delta\|\mathrm{e}^{\left(\delta+\frac{2}{p+1}\right)\psi(s,\cdot)}u(s,\cdot)\|_{L^{p+1}(\mathbf{H}_n)}\bigg)^{\frac{p+1}{2}}.\label{global existence thm est 1}
\end{align} 
 As $2<p+1$ and $p+1<2p\leq \frac{2\mathcal{Q}}{\mathcal{Q}-2}$, we find that $\theta(p+1)\in (0,1]$. Besides, we may take $\delta>0$ sufficiently small such that $\delta+\frac{2}{p+1}<1$. Let us stress that throughout the proof we will prescribe further conditions that the quantity $\delta$ has to fulfill. Hence, by Lemma \ref{2 lemma GN with weight} we obtain
\begin{align*}
\|\mathrm{e}^{\left(\delta+\frac{2}{p+1}\right)\psi(s,\cdot)}u(s,\cdot)\|_{L^{p+1}(\mathbf{H}_n)}&\lesssim (1+s)^{\frac{1}{2}(1-\theta(p+1))}\|\nabla_{\hor} u(s,\cdot)\|_{L^2(\mathbf{H}_n)}^{1-\left(\delta+\frac{2}{p+1}\right)}\|\mathrm{e}^{\psi(s,\cdot)}\nabla_{\hor} u(s,\cdot)\|_{L^2(\mathbf{H}_n)}^{\delta+\frac{2}{p+1}}\\ 
&\lesssim (1+s)^{\frac{1}{2}(1-\theta(p+1))-\left(1-\left(\delta+\frac{2}{p+1}\right)\right)\left(\frac{\mathcal{Q}}{4}+\frac{1}{2}\right)}\|u\|_{X(t)}\\ 
&\lesssim (1+s)^{\frac{\mathcal{Q}+1}{p+1}-\frac{\mathcal{Q}}{2}+\delta\left(\frac{\mathcal{Q}}{4}+\frac{1}{2}\right)}\|u\|_{X(t)}
\end{align*} for any $s\in [0,t]$.
As we assume $p>p_{\Fuj}(\mathcal{Q})$ (which is equivalent to require that $\frac{\mathcal{Q}+1}{p+1}-\frac{\mathcal{Q}}{2}<0$), we may consider $\delta>0$ such that 
\begin{align*}
\tfrac{\mathcal{Q}+1}{p+1}-\tfrac{\mathcal{Q}}{2}+\delta\left(\tfrac{\mathcal{Q}}{4}+\tfrac{1}{2}+1\right)<0.
\end{align*} Therefore, by \eqref{global existence thm est 1} we have
\begin{align}
\|\mathrm{e}^{\psi(t,\cdot)}u_t(t,\cdot)\|_{L^2(\mathbf{H}_n)}  +\|\mathrm{e}^{\psi(t,\cdot)}\nabla_{\hor} u(t,\cdot)\|_{L^2(\mathbf{H}_n)}\lesssim   \varepsilon_0+\varepsilon_0^{\frac{p+1}{2}}+ \|u\|_{X(t)}^{\frac{p+1}{2}}. \label{control weighted norms}
\end{align}
Let us proceed now with the estimate of the not-weighted $L^2(\mathbf{H_n})$ - norms. We will follow precisely the computations for the Euclidean case (cf. \cite[Section 18.1]{ER18}).
Thus,
\begin{align}
\|\partial_t^\ell \nabla_{\hor}^k u(t,\cdot)\|_{L^2(\mathbf{H}_n)} &  \lesssim \varepsilon_0 \, (1+t)^{-\frac{\mathcal{Q}}{4}-\frac{k}{2}-\ell} +\int_0^{t/2} (1+t-s)^{-\frac{\mathcal{Q}}{4}-\frac{k}{2}-\ell} \Big(\|u(s,\cdot)\|^p_{L^p(\mathbf{H}_n)}+\|u(s,\cdot)\|^p_{L^{2p}(\mathbf{H}_n)}\Big)\mathrm{d}s \notag\\ & \quad +\int_{t/2}^t (1+t-s)^{-\frac{k}{2}-\ell}\|u(s,\cdot)\|^p_{L^{2p}(\mathbf{H}_n)}\, \mathrm{d}s\label{global existence thm est 1,5}
\end{align} for $k+\ell =0,1$, where we used \eqref{embedding spaces data} to estimate the solution of the corresponding linear homogeneous problem, the $L^1\cap L^2$ - $L^2$ estimates \eqref{estimate u}, \eqref{estimate nabla hor u} and \eqref{estimate partial t u} to estimate Duhamel's term on the interval $[0,t/2]$ and the $L^2$ - $L^2$ estimates \eqref{estimate u only L2}, \eqref{estimate nabla hor u only L2} and \eqref{estimate partial t u only L2} on the interval $[t/2,t]$.
 Applying \eqref{L1-L2 weight est} and \eqref{L2-L2 weight est} to $|u(s,\cdot)|^p$ with $\sigma=\delta p$ and using \eqref{GN inequality weighted} and the definition of the norm $\|\cdot\|_{X(t)}$, we arrive at
\begin{align*} % \label{global existence thm est 2}
\|u(s,\cdot)\|^p_{L^{p}(\mathbf{H}_n)} & \lesssim (1+s)^\frac{\mathcal{Q}}{4}\|\mathrm{e}^{\delta\psi(s,\cdot)}u(s,\cdot)\|_{L^{2p}(\mathbf{H}_n)}^p  \\&  \lesssim (1+s)^{\frac{\mathcal{Q}}{4}+\frac{p}{2}(1-\theta(2p))}\|\nabla_{\hor} u(s,\cdot)\|_{L^{2}(\mathbf{H}_n)}^{(1-\delta)p} \, \|\mathrm{e}^{\psi(s,\cdot)}\nabla_{\hor}u(s,\cdot)\|_{L^{2}(\mathbf{H}_n)}^{\delta p}\\
&  \lesssim (1+s)^{\frac{\mathcal{Q}}{4}+\frac{p}{2}(1-\theta(2p))-(1-\delta)p\left(\frac{\mathcal{Q}}{4}+\frac{1}{2}\right)}\|u\|_{X(t)}^{p}=(1+s)^{-\frac{\mathcal{Q}p}{2}+\frac{\mathcal{Q}}{2}+\delta p\left(\frac{\mathcal{Q}}{4}+\frac{1}{2}\right)}\|u\|_{X(t)}^{p}
\end{align*} and 
\begin{align*}
\|u(s,\cdot)\|^p_{L^{2p}(\mathbf{H}_n)} & \lesssim \|\mathrm{e}^{\delta\psi(s,\cdot)}u(s,\cdot)\|_{L^{2p}(\mathbf{H}_n)}^p  \\&  \lesssim (1+s)^{\frac{p}{2}(1-\theta(2p))}\|\nabla_{\hor} u(s,\cdot)\|_{L^{2}(\mathbf{H}_n)}^{(1-\delta)p} \, \|\mathrm{e}^{\psi(s,\cdot)}\nabla_{\hor}u(s,\cdot)\|_{L^{2}(\mathbf{H}_n)}^{\delta p}\\
&  \lesssim (1+s)^{\frac{p}{2}(1-\theta(2p))-(1-\delta)p\left(\frac{\mathcal{Q}}{4}+\frac{1}{2}\right)}\|u\|_{X(t)}^{p}=(1+s)^{-\frac{\mathcal{Q}p}{2}+\frac{\mathcal{Q}}{4}+\delta p\left(\frac{\mathcal{Q}}{4}+\frac{1}{2}\right)}\|u\|_{X(t)}^{p},
\end{align*}  where we might apply \eqref{GN inequality weighted} thanks to the upper bound $p\leq p_{\mathrm{GN}}(\mathcal{Q})$ that guarantees $\theta(2p)\in (0,1]$. We estimate separately the two integrals on the right-hand side of \eqref{global existence thm est 1,5}.

 Let us begin with the integral over $[0,t/2]$:
\begin{align*}
\int_0^{t/2} (1+t-s & )^{-\frac{\mathcal{Q}}{4}-\frac{k}{2}-\ell} \Big(\|u(s,\cdot)\|^p_{L^p(\mathbf{H}_n)}+\|u(s,\cdot)\|^p_{L^{2p}(\mathbf{H}_n)}\Big)\mathrm{d}s \\ 
&  \lesssim  \int_0^{t/2} (1+t-s)^{-\frac{\mathcal{Q}}{4}-\frac{k}{2}-\ell} (1+s)^{-\frac{\mathcal{Q}p}{2}+\frac{\mathcal{Q}}{2}+\delta p\left(\frac{\mathcal{Q}}{4}+\frac{1}{2}\right)} \, \mathrm{d}s \, \|u\|_{X(t)}^{p} \\ 
&  \lesssim  (1+t)^{-\frac{\mathcal{Q}}{4}-\frac{k}{2}-\ell}  \int_0^{t/2}(1+s)^{-\frac{\mathcal{Q}p}{2}+\frac{\mathcal{Q}}{2}+\delta p\left(\frac{\mathcal{Q}}{4}+\frac{1}{2}\right)} \, \mathrm{d}s \, \|u\|_{X(t)}^{p}.
\end{align*}
Since $p>p_{\Fuj}(\mathcal{Q})$ and, equivalently,  $-\frac{\mathcal{Q}p}{2}+\frac{\mathcal{Q}}{2}<-1$, we can find $\delta>0$ such that 
\begin{align}
-\tfrac{\mathcal{Q}p}{2}+\tfrac{\mathcal{Q}}{2}+\delta p\left(\tfrac{\mathcal{Q}}{4}+\tfrac{1}{2}\right)<-1. \label{condition delta}
\end{align}
Consequently, 
\begin{align*}
\int_0^{t/2} (1+t-s )^{-\frac{\mathcal{Q}}{4}-\frac{k}{2}-\ell} \Big(\|u(s,\cdot)\|^p_{L^p(\mathbf{H}_n)}+\|u(s,\cdot)\|^p_{L^{2p}(\mathbf{H}_n)}\Big)\mathrm{d}s \lesssim  (1+t)^{-\frac{\mathcal{Q}}{4}-\frac{k}{2}-\ell}  \, \|u\|_{X(t)}^{p}.
\end{align*} Using again \eqref{condition delta}, for the integral over $[t/2,t]$ we obtain
\begin{align*}
\int_{t/2}^t (1+t-s)^{-\frac{k}{2}-\ell}\|u(s,\cdot)\|^p_{L^{2p}(\mathbf{H}_n)}\, \mathrm{d}s &  \lesssim  \int_{t/2}^t (1+t-s)^{-\frac{k}{2}-\ell}(1+s)^{-\frac{\mathcal{Q}p}{2}+\frac{\mathcal{Q}}{4}+\delta p\left(\frac{\mathcal{Q}}{4}+\frac{1}{2}\right)}\, \mathrm{d}s \, \|u\|_{X(t)}^{p} \\
&  \lesssim  (1+t)^{-\frac{\mathcal{Q}p}{2}+\frac{\mathcal{Q}}{4}+\delta p\left(\frac{\mathcal{Q}}{4}+\frac{1}{2}\right)} \int_{t/2}^t (1+t-s)^{-\frac{k}{2}-\ell}\, \mathrm{d}s \, \|u\|_{X(t)}^{p} \\
&  \lesssim  (1+t)^{-\frac{\mathcal{Q}p}{2}+\frac{\mathcal{Q}}{4}+\delta p\left(\frac{\mathcal{Q}}{4}+\frac{1}{2}\right)-\frac{k}{2}-\ell+1} \left(\log(1+t)\right)^{\ell} \|u\|_{X(t)}^{p} \\
&  \lesssim  (1+t)^{-\frac{\mathcal{Q}}{4}-\frac{k}{2}-\ell} \, \|u\|_{X(t)}^{p}.
\end{align*} Summarizing, from \eqref{global existence thm est 1,5} we derived
\begin{align}
(1+t)^{\frac{\mathcal{Q}}{4}+\frac{k}{2}+\ell} \,  \|\partial_t^\ell \nabla_{\hor}^k u(t,\cdot)\|_{L^2(\mathbf{H}_n)} &  \lesssim \varepsilon_0 + \|u\|_{X(t)}^{p}. \label{control unweighted norms}
\end{align} 
 Therefore, combining \eqref{control weighted norms} and \eqref{control unweighted norms}, it follows 
\begin{align}\label{est for the norm of u in X(T)}
\|u\|_{X(T)}\lesssim \varepsilon_0+\varepsilon_0^{\frac{p+1}{2}}+ \|u\|_{X(T)}^{\frac{p+1}{2}}+\|u\|_{X(T)}^p.
\end{align}  If $\varepsilon_0>0$ is small enough, then, from the last inequality we get that $\|u\|_{X(T)}$ is uniformly bounded, more precisely, 
\begin{align} \label{uniform boundedness of u in X(T)}
\|u\|_{X(T)}\lesssim \varepsilon_0
\end{align} for any $T\in(0,T_\mathrm{max})$ (cf. \cite[Section 6]{Pal17}, for example). 
Besides, from
\begin{align*}
\mathrm{e}^{\psi(t,\eta)} u(t,\eta) = \mathrm{e}^{\psi(t,\eta)} u_0(\eta)+\int_0^t  \mathrm{e}^{\psi(t,\eta)} u_t(s,\eta) \, \mathrm{d}s
\end{align*} and by using the monotonicity of $\psi$ with respect to $t$, we get
\begin{align*}
\| \mathrm{e}^{\psi(t,\cdot)} u(t,\cdot)\|_{L^2(\mathbf{H}_n)} & \lesssim  \varepsilon_0 +\int_0^t \|  \mathrm{e}^{\psi(t,\cdot)} u_t(s,\cdot)\|_{L^2(\mathbf{H}_n)} \, \mathrm{d}s  \lesssim  \varepsilon_0 +\int_0^t \|  \mathrm{e}^{\psi(s,\cdot)} u_t(s,\cdot)\|_{L^2(\mathbf{H}_n)} \, \mathrm{d}s \\ & \lesssim \varepsilon_0 (1+T),
\end{align*} where in the last estimate we used \eqref{uniform boundedness of u in X(T)}.
 Therefore, if $T_\mathrm{max}<\infty$, then, it holds
\begin{align*}
\limsup_{T\to T_\mathrm{max}^-}\left(\| \mathrm{e}^{\psi(t,\cdot)} u(t,\cdot)\|_{L^2(\mathbf{H}_n)}+\| \mathrm{e}^{\psi(t,\cdot)}\nabla_{\hor} u(t,\cdot)\|_{L^2(\mathbf{H}_n)}\| \mathrm{e}^{\psi(t,\cdot)} u_t(t,\cdot)\|_{L^2(\mathbf{H}_n)}\right) \lesssim \varepsilon_0 (1+T) <\infty .
\end{align*} 
Nevertheless, this is impossible according to the last part of Theorem \ref{thm local existence}, so  $T_\mathrm{max}=\infty$, that is $u$, has to be a global solution. The decay estimates for $u$ and its first order derivatives from the statement follows by the relation \eqref{uniform boundedness of u in X(T)} which holds uniformly with respect to $T$. 
\end{proof}

\section{Blow-up: proof of Theorem \ref{thm blow up}} \label{Section Blow-up}

Before proving Theorem \ref{thm blow up}, we recall briefly the definition of weak solution to \eqref{CP semilinear 1}.

\begin{definition} A \emph{weak solution} of the Cauchy problem \eqref{CP semilinear 1} in $[0,T)\times \mathbf{H}_n$ is a function $u\in L^p_{\loc}([0,T)\times \mathbf{H}_n)$ that satisfies
\begin{align}
& \int_0^T \int_{\mathbf{H}_n}|u(t,\eta)|^p\varphi(t,\eta) \, \mathrm{d}\eta \, \mathrm{d}t+  \int_{\mathbf{H}_n} \big(u_0(\eta) +u_1(\eta) \big)\varphi(0,\eta) \, \mathrm{d}\eta -  \int_{\mathbf{H}_n} u_0(\eta) \partial_t\varphi(0,\eta) \, \mathrm{d}\eta \notag \\ & \qquad  = \int_0^T \int_{\mathbf{H}_n}u(t,\eta)\left( \partial_t^2\varphi(t,\eta) -\Delta_{\hor}  \varphi(t,\eta) - \partial_t\varphi(t,\eta) \right) \mathrm{d}\eta \, \mathrm{d}t \label{def weak sol formula}
\end{align}
for any $\varphi \in \mathcal{C}_0^\infty([0,T)\times \mathbf{H}_n)$. If $T=\infty$, we call $u$ a \emph{global} in time weak solution to \eqref{CP semilinear 1}, else we call $u$ a \emph{local} in time weak solution.
\end{definition}

\begin{proof}[Proof of Theorem \ref{thm blow up}]
We apply the so-called \emph{test function method}. By contradiction, we assume that there exists a global in time weak solution $u$ to \eqref{CP semilinear 1}.

 Let us consider two bump functions $\alpha\in \mathcal{C}_0^\infty(\mathbb{R}^n)$ and $\beta\in \mathcal{C}_0^\infty(\mathbb{R})$. Furthermore, we require that $\alpha,\beta$ are radial symmetric and decreasing with respect to the radial variable,  $\alpha=1$ on $B_{n}(\frac{1}{2})$, $\beta=1$ on $[-\frac{1}{4},\frac{1}{4}]$, $\supp\alpha \subset B_{n}(1)$ and $\supp\beta \subset (-1,1)$.
  If $R>1$ is a parameter, then, we define the test function $\varphi_R\in \mathcal{C}^\infty_0([0,\infty)\times \mathbb{R}^{2n+1})$ with separate variables as follows:
 \begin{align}
 \varphi_R(t,x,y,\tau) \doteq \beta\left(\tfrac{t}{R^2}\right)\alpha\left(\tfrac{x}{R}\right)\alpha\left(\tfrac{y}{R}\right)\beta\left(\tfrac{\tau}{R^2}\right) \quad \mbox{for any} \ (t,x,y,\tau)\in [0,\infty)\times \mathbb{R}^{2n+1}. \label{def varphiR}
 \end{align} It is well-know that
 \begin{align*}
 |\partial_j \alpha| & \lesssim \alpha^{\frac{1}{p}} \quad \mbox{for any} \ 1\leq j\leq n, \quad
  |\partial_j \partial_k \alpha|  \lesssim \alpha^{\frac{1}{p}} \quad \mbox{for any} \ 1\leq j,k \leq n, \quad
  |\beta '|  \lesssim \beta^{\frac{1}{p}}, \quad |\beta''| \lesssim \beta^{\frac{1}{p}}.
 \end{align*} Furthermore, $0\leq \alpha,\beta \leq 1$ implies immediately $\alpha\leq \alpha^{\frac{1}{p}} $ and $\beta\leq \beta^{\frac{1}{p}} $. Therefore, from the relations
\begin{align*}
 \partial_t \varphi_R(t,x,y,\tau) &= R^{-2} \beta'\left(\tfrac{t}{R^2}\right)\alpha\left(\tfrac{x}{R}\right)\alpha\left(\tfrac{y}{R}\right)\beta\left(\tfrac{\tau}{R^2}\right), \\
 \partial_t^2 \varphi_R(t,x,y,\tau) &= R^{-4} \beta''\left(\tfrac{t}{R^2}\right)\alpha\left(\tfrac{x}{R}\right)\alpha\left(\tfrac{y}{R}\right)\beta\left(\tfrac{\tau}{R^2}\right), \\
 \Delta_{\hor}\varphi_R(t,x,y,\tau) &= R^{-2} \beta\left(\tfrac{t}{R^2}\right)\Delta \alpha\left(\tfrac{x}{R}\right)\alpha\left(\tfrac{y}{R}\right)\beta\left(\tfrac{\tau}{R^2}\right) +R^{-2} \beta\left(\tfrac{t}{R^2}\right) \alpha\left(\tfrac{x}{R}\right)\Delta \alpha\left(\tfrac{y}{R}\right)\beta\left(\tfrac{\tau}{R^2}\right) \\
 & \quad +  R^{-3}\sum _{j=1}^n x_j \beta\left(\tfrac{t}{R^2}\right)\alpha\left(\tfrac{x}{R}\right)\partial_j \alpha\left(\tfrac{y}{R}\right)\beta'\left(\tfrac{\tau}{R^2}\right) -  R^{-3}\sum _{j=1}^n y_j \beta\left(\tfrac{t}{R^2}\right)\partial_j \alpha\left(\tfrac{x}{R}\right)\alpha\left(\tfrac{y}{R}\right)\beta'\left(\tfrac{\tau}{R^2}\right)  \\
 & \quad + \tfrac14 R^{-4} (|x|^2+|y|^2) \beta\left(\tfrac{t}{R^2}\right)\alpha\left(\tfrac{x}{R}\right)\alpha\left(\tfrac{y}{R}\right)\beta''\left(\tfrac{\tau}{R^2}\right),
\end{align*} where $\Delta$ denotes the Laplace operator on $\mathbb{R}^n$, we get
\begin{equation} \label{estimate derivatives of varphi_R}
\begin{split}
 | \partial_t \varphi_R| & \lesssim R^{-2} ( \varphi_R)^{\frac{1}{p}},  \\  | \partial_t^2 \varphi_R| & \lesssim R^{-4} ( \varphi_R)^{\frac{1}{p}} \lesssim R^{-2} ( \varphi_R)^{\frac{1}{p}},  \\ | \Delta_{\hor} \varphi_R| & \lesssim R^{-2} (\varphi_R)^{\frac{1}{p}}.
 \end{split}
\end{equation} We used that $\supp \varphi_R \subset [0,R^2]\times B^n(R)\times B^n(R) \times [-R^2,R^2]$ in order to estimate the polynomial terms in the estimate of $|\Delta_{\hor} \varphi_R|$.

Let us apply the definition of weak solution \eqref{def weak sol formula} to the test function $\varphi_R$. Hence, by \eqref{estimate derivatives of varphi_R} we obtain
\begin{align}
 \int_0^\infty \int_{\mathbf{H}_n} & |u(t,\eta)|^p\varphi_R(t,\eta) \, \mathrm{d}\eta \, dt+  \int_{\mathbf{H}_n}\big(u_0(\eta) +u_1(\eta)\big)\varphi_R(0,\eta) \, \mathrm{d}\eta \notag\\
 & \leq  \int_0^\infty \int_{\mathbf{H}_n}|u(t,\eta)|\big(|\partial_t^2 \varphi_R(t,\eta)| +|\Delta_{\hor} \varphi_R(t,\eta)|+|\partial_t \varphi_R(t,\eta)|\big)  \, \mathrm{d}\eta \, \mathrm{d}t \notag \\
 &  \lesssim  R^{-2} \int_0^\infty \int_{\mathbf{H}_n}|u(t,\eta)| (\varphi_R(t,\eta))^{\frac{1}{p}} \, \mathrm{d}\eta \, \mathrm{d}t \notag  \\
 & \leq  R^{-2}  \bigg(\int_0^\infty \int_{\mathbf{H}_n} |u(t,\eta)|^p \varphi_R(t,\eta) \, \mathrm{d}\eta \, \mathrm{d}t \bigg)^{\frac{1}{p}}\bigg(\iint_{[0,R^2]\times \mathcal{D}_R} \mathrm{d}\eta \, \mathrm{d}t  \bigg)^{\frac{1}{p'}},\label{chain ineq weak sol varphi_R}
\end{align} 
where  in the last step we used H\"older's inquality and the support property for $\varphi_R$.
Let us introduce now the $R$-dependent integrals
\begin{align}
I_{R}\doteq \int_0^\infty \int_{\mathbf{H}_n}  |u(t,\eta)|^p\varphi_R(t,\eta) \, \mathrm{d}\eta \, \mathrm{d}t, \quad
J_{R}\doteq \int_{\mathbf{H}_n}\big(u_0(\eta)+u_1(\eta)\big)\varphi_R(0,\eta) \, \mathrm{d}\eta. \label{def IR and JR}
\end{align} Due to the assumption on the data in \eqref{assumption intial data TFM}, we have $\liminf_{R\to \infty} J_R>0$, which implies in turn that $J_R>0$ for $R\geq R_0$, where $R_0$ is a suitable positive real number. Indeed, from $\supp \varphi_R(0,\cdot)\subset \mathcal{D}_R$ and $\varphi_R(0,\cdot)=1$ on $\mathcal{D}_{R/2}$ we get trivially
\begin{align*}
J_R= \int_{\mathcal{D}_R}\big(u_0(\eta)+u_1(\eta)\big)\varphi_R(0,\eta) \, \mathrm{d}\eta \geq \int_{\mathcal{D}_{R/2}}\big(u_0(\eta)+u_1(\eta)\big) \, \mathrm{d}\eta.
\end{align*} Then, for $R\geq R_0$ the estimate in \eqref{chain ineq weak sol varphi_R} yields
\begin{align}\label{estimate I_R intermediate}
I_R\leq I_R+ J_R \lesssim R^{-2+\frac{2n+4}{p'}} I_R^{\frac{1}{p}}=R^{\mathcal{Q}-\frac{\mathcal{Q}+2}{p}} I_R^{\frac{1}{p}},
\end{align} where we applied $\mathrm{meas}(\mathcal{D}_R)\approx R^{\mathcal{Q}}$. When the exponent of $R$ in the right-hand side of the last inequality  is negative, i.e., for $p<p_{\Fuj}(\mathcal{Q})$, we have that $$0\leq I_R^{1-\frac{1}{p}}\lesssim R^{\mathcal{Q}-\frac{\mathcal{Q}+2}{p}}\longrightarrow  0 \quad \mbox{as}\  R\to \infty.$$ Thus, $\lim_{R\to \infty} I_R =0$. However, this is not possible, because the term $J_R$ is positive for $R$ sufficiently large. So, letting $R\to \infty$ in \eqref{estimate I_R intermediate} we find the contradiction we were looking for. In order to get a contradiction in the critical case $p=p_{\Fuj}(\mathcal{Q})$ too, we need to refine the estimate in \eqref{chain ineq weak sol varphi_R}. Indeed, we can use the fact that $\partial_t \varphi_R$ is supported in $\widehat{\mathcal{P}}_R\doteq [\frac{R^2}{4},R^2]\times \mathcal{D}_R$ and $\Delta_{\hor} \varphi_R$  is supported in $\widetilde{\mathcal{P}}_R\doteq[0,R^2]\times (\mathcal{D}_{1,R}\cup \mathcal{D}_{2,R}\cup \mathcal{D}_{3,R})$, where
\begin{align*}
\mathcal{D}_{1,R} & \doteq \big(B_n(R)\setminus B_n(R/2)\big)\times B_n(R)\times [-R^2,R^2] ,\\
\mathcal{D}_{2,R} & \doteq B_n(R)\times \big(B_n(R)\setminus B_n(R/2)\big)\times  [-R^2,R^2] ,\\
\mathcal{D}_{3,R} & \doteq  B_n(R) \times (B_n(R))\times \big( [-R^2,R^2]\setminus [-R^2/4,R^2/4]\big).
\end{align*} Consequently, for $R\geq R_0$ we may improve \eqref{chain ineq weak sol varphi_R} as follows
\begin{align}\label{estimate I_R intermediate improvement}
I_R\leq I_R+ J_R\lesssim  \widehat{I}_R^{\frac{1}{p}}+\widetilde{I}_R^{\frac{1}{p}},
\end{align} where
\begin{align*}
\widehat{I}_R  \doteq \iint_{\widehat{\mathcal{P}}_R}  |u(t,\eta)|^p\varphi_R(t,\eta) \, \mathrm{d}\eta \, \mathrm{d}t \quad \mbox{and} \quad
\widetilde{I}_R \doteq \iint_{\widetilde{\mathcal{P}}_R}  |u(t,\eta)|^p\varphi_R(t,\eta) \, \mathrm{d}\eta \, \mathrm{d}t.
\end{align*} In the critical case $p=p_{\Fuj}(\mathcal{Q})$, from \eqref{estimate I_R intermediate} it follows that $I_R$ is uniformly bounded as $R\to\infty$. Using the monotone convergence theorem, we find
\begin{align*}
\lim_{R\to\infty} I_R =  \lim_{R\to\infty} \int_0^\infty \int_{\mathbf{H}_n}  |u(t,\eta)|^p\varphi_R(t,\eta) \, \mathrm{d}\eta \, \mathrm{d}t = \int_0^\infty \int_{\mathbf{H}_n}  |u(t,\eta)|^p  \, \mathrm{d}\eta \, \mathrm{d}t \lesssim 1.
\end{align*} This means that $u\in L^p([0,\infty)\times \mathbf{H}_n)$. Applying now the dominated convergence theorem, as the characteristic functions of the sets $\widehat{\mathcal{P}}_R$ and $\widetilde{\mathcal{P}}_R$ converge to the zero function for $R\to \infty$, we have
\begin{align*}
\lim_{R\to\infty} \widehat{I}_R &= \lim_{R\to\infty} \iint_{\widehat{\mathcal{P}}_R}  |u(t,\eta)|^p\varphi_R(t,\eta) \, \mathrm{d}\eta \, \mathrm{d}t =0,\\
\lim_{R\to\infty} \widetilde{I}_R &= \lim_{R\to\infty} \iint_{\widetilde{\mathcal{P}}_R}  |u(t,\eta)|^p\varphi_R(t,\eta) \, \mathrm{d}\eta \, \mathrm{d}t =0.
\end{align*} Also, letting $R\to \infty$, \eqref{estimate I_R intermediate improvement} implies $\lim_{R\to\infty}I_R=0$ which provides the desired contradiction in turn, as we have already seen in the subcritical case. The proof is completed.
\end{proof}

%\section{Final remarks and open problems} 

%\appendix

\section*{Acknowledgments}

V. Georgiev is supported in part by
GNAMPA - Gruppo Nazionale per l'Analisi Matematica,
la Probabilit\`a e le loro Applicazioni,
by Institute of Mathematics and Informatics,
Bulgarian Academy of Sciences, by Top Global University Project, Waseda University and  by the University of Pisa, Project PRA 2018 49.
A. Palmieri is supported by the University of Pisa, Project PRA 2018 49. 

%% The Appendices part is started with the command \appendix;
%% appendix sections are then done as normal sections
%% \appendix

%% \section{}
%% \label{}

%% If you have bibdatabase file and want bibtex to generate the
%% bibitems, please use
%%
%%  \bibliographystyle{elsarticle-harv} 
%%  \bibliography{<your bibdatabase>}

%% else use the following coding to input the bibitems directly in the
%% TeX file.

%\section*{References}

\end{document}